\newcommand{\bydef}{:=}
\newtheorem{con}[subsection]{Conjecture}
\newtheorem{thm}[subsection]{Theorem}
\newtheorem{lemma}[subsection]{Lemma}
\newtheorem{pro}[subsection]{Proposition}
\newtheorem{cor}[subsection]{Corollary}
\newenvironment{romanenumerate}
{\begin{enumerate}
 
 }{\end{enumerate}}
\newtheorem{rk}[subsection]{Remark}
\newtheorem{defn}[subsection]{Definition}
\newtheorem{ex}{Example}
\numberwithin{equation}{section} \setcounter{tocdepth}{1}
\newcommand{\cE}{{\mathcal E}}
\DeclareMathOperator{\ann}{ann}
\newcommand{\NN}{{\mathbb N}}
\newcommand{\FF}{\mathbb{F}}
\newcommand{\w}{{\bf w}}
\newcommand{\bea}{\begin{eqnarray}}
\newcommand{\eea}{\end{eqnarray}}
\def\ann{\operatorname{ann}}
\def\w{\omega}
\begin{document}

\title[Volterra Evolution Algebras and Their Graphs]
{Volterra Evolution Algebras and Their Graphs}
\author{Izzat Qaralleh}
\address{Izzat Qaralleh\\
Department of Mathematics\\
Faculty of Science, Tafila Technical
University\\
Tafila, Jordan}
\email{{\tt izzat\_math@yahoo.com}}

\author{Farrukh Mukhamedov}
\address{Farrukh Mukhamedov\\
 Department of Mathematical Sciences\\
College of Science, The United Arab Emirates University\\
P.O. Box, 15551, Al Ain\\
Abu Dhabi, UAE} \email{{\tt far75m@gmail.com} {\tt
farrukh.m@uaeu.ac.ae}}

\date{Received: xxxxxx; Revised: yyyyyy; Accepted: zzzzzz.
\newline \indent $^{*}$ Corresponding author}

\begin{abstract}
In this paper, we introduce Volterra evolution algebras which are evolution algebras whose structural matrices  are described by skew symmetric matrices.  A main result of the present paper gives a connection between such kind of
 algebras with ergodicities of Volterra  quadratic stochastic operators. Furthermore, some of properties of the considered algebras such as nilpotency, derivations have been studied as well.      
 \vskip 0.3cm \noindent {\it
Mathematics Subject Classification}: 17D92, 17D99, 39A70, 47H10.\\
{\it Key words}: Evolution algebra; Volterra quadratic stochastic operator; Nilpotent; Isomorphism; Derivation. 
\end{abstract}

\maketitle

\section{Introduction}

There exist several classes of non-associative algebras (baric, evolution, Bernstein, train, stochastic,
etc.), whose investigations have provided a number of significant contributions to theoretical population
genetics \cite{Reed, WB}. These classes have been defined in different times by several authors, and all algebras
belonging to such classes are generally called \textit{genetic} (see \cite{ZSSS,WB}). In \cite{E}  it was introduced the formal language of
abstract algebra to the study of the genetics. On the other hand, problems of population genetics can be traced
back to Bernstein's work \cite{B0} where evolution operators were studied which naturally define genetic algebras (see \cite{CM,lu}).  Notice that such kind of evolution operators are described by quadratic stochastic operators (QSO) \cite{GMR,lu}. 
Dynamics of QSO is closely related to the 
investigation of certain algebraic properties of the evolution algebras (see \cite{R8}).    

In\cite{TV}  a new type of evolution algebra has been introduced. 
Thereafter, in \cite{T} the foundations of these algebras have been established. These types of algebras lie between algebras and dynamical systems. Although, evolution algebras do not form a variety (they are not defined by identities),
algebraically, their structure has table of multiplication, which satisfies the conditions of
commutative algebra. Dynamically, they represent discrete dynamical systems. In this context, an evolution algebra is nothing
but a finite-dimensional algebra $\cE$ provided with a basis $B = \{e_i : \  i\in \Lambda \}$ , such that $e_ie_j=0$ whenever $i\neq j$   
(such a basis is said to be \textit{natural}), and $e_i^2=\sum_{k=1}^{n}a_{ik}e_k$. The coefficients $a_{ik}$ define the structure matrix $A_\cE$ of $\cE$ relative to $B$ that codifies the dynamic structure of $\cE.$ 
These kind of algebras have numerous connections with other mathematical filed such as graph theory, group theory, Markov chains, dynamical systems, knot theory, 3-manifolds 
and the study of the Riemann-Zeta function (see \cite{T}).

In \cite{CSV,HA1,R1,R6,R4,R3} certain properties(such as nilpotency, derivations) of evolution algebras have been investigated.  In \cite{EL} nilpotency of evolution algebras  has been 
studied by means of graphs.  
Recently, many author have  studied evolution algebras, whose structural matrices are identified by the coefficients of
inheritance of  some quadratic stochastic operators  (see for instance \cite{LLR,R3,R4,R5,R7,R8} ). However, they have studied the properties of such algebra from algebraic point of view,
and has no implementation of dynamical behavior of such kind of operators even some dynamical properties have been carried out. Therefore, it is very natural to find some connections between 
the evolution algebra and the associated dynamical system ( see \cite{R7,R9}).  
In the present paper, we are gong to clarify this issue in the class of Volterra evolution algebras which we are going to be introduced. Namely, by looking some properties of Volterra 
evolution algebras,  we could predict dynamical behavior of  associated Volterra QSO. We notice that every Volterra QSO defines a genetic algebra (in sense of \cite{lu}) whose some properties have been investigated in \cite{G,GMPQ}. We
point out that our new algebra is not related to these types of algebras. 

The paper is organized as following. In section 2. we introduce Volterra evolution algebras, and prove that such kind of algebras are not nilpotent. We notice that 
in the literature mostly nilpotent evolution algebras have been investigated \cite{CSV,R1}. In section 3, we describe  isomorphism of some Volterra evolution algebras 
which will allow us to find a connect to the prediction of dynamical behavior of Volterra QSO. It is known that every Volterra QSO is associated to some weighted graph.  Furthermore, in section 4 we study 
the associated weighted graphs and Volterra evolution algebras, and prove that the graphs are isomorphic if and only if the corresponding Volterra algebras are isomorphic. 
Moreover, in the final section 5, we prove that derivation of Volterra evolution algebras whose graph is complete, is trivial. Moreover, the description of all derivations on 3-dimensional Volterra evolution algebras is provided.
       
\section{Voltera Quadratic Stochastic Operators}

In this section we recall a definition and some basic properties of Volterra quadratic stochastic operators.
 Let $$S^{m-1}=\{\textbf{x}=(x_1,x_2,...,x_m)\in \mathbb{R}^m \ : x_i\geq 0,\ \sum_{i=1}^{m}x_i=1\}$$ be the $m-1$ dimensional simplex.
A mapping $V:S^{m-1}\to S^{m-1}$ defined by
\begin{equation}\label{qso}
(Vx)_k := \sum_{i,j=1}^{m}p_{ij,k}x_ix_j, \ \ \  \ \ \ \ \ \ \forall k=\overline{1,m}
\end{equation}
is said to be a \textit{quadratic stochastic operator  (QSO)}  where $p_{ij,k}\geq0,$ $\sum_{k=1}^{m}p_{ij,k}$ and $p_{ij,k}=p_{ji,k}\ \forall i, j=\overline{1,m}.$

A QSO \eqref{qso} is called \textit{Volterra} if $p_{ij,k}=0 $ whenever $k\notin \{i,j\}$ for any $i,j\in\{1,m\}.$

We notice that a biological meaning of the Volterra condition is obvious, i.e. the offspring
repeats one of its parents genotype. 

It is known \cite{G} that any Volterra QSO can be
written in the following form 
\begin{equation}\label{volt}
(Vx)_k:=x_k\bigg(1+\sum_{i=1}^{m}a_{ki}x_i\bigg), \ \ k=1,\cdots,m,
\end{equation}
where $A_m=(a_{ki})_{k,i=1}^{m}$  is a skew-symmetric matrix with 
$|a_{ki}|\leq 1.$ 
Dynamics of such kind of operators was studied in \cite{G}. We refer to \cite{GMR} for general review about the theory of QSO. 

On the basis of numerical calculations Ulam conjectured \cite{ulma} that
the ergodic theorem holds for any QSO $V$ , that is, the
 $$\lim_{n\to \infty}\frac{1}{n}\sum_{k=0}^{n-1}V^kx$$  exists for any
$x \in S^{m-1}$. In 1977 Zakharevich \cite{Zak} proved that this conjecture is false, in general. He constructed an example of Volterra QSO which is not ergodic. 
Later on, in \cite{Nasr}, this result has been extended to general Volterra QSO in $S^2$ given by
\begin{equation}\label{2d}
V:\left\{
\begin{array}{l}
x_1'=x_1(1+ax_2-bx_3)\\
x_2'=x_2(1-ax_1+cx_3)\\
x_3'=x_3(1+bx_1-cx_2)
\end{array} \right.
\end{equation}
Namely, the following result has been established.

\begin{thm}\label{main} \cite{Nasr}
If the parameters $a, b, c$ for the Volterra quadratic stochastic operator \eqref{2d} have the
same sign and each of them non-zero, then the ergodic theorem will fail for this operator.
\end{thm}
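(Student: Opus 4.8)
The plan is to exploit the boundary structure of the operator \eqref{2d} when $a,b,c$ share a common sign. Without loss of generality I would assume $a,b,c>0$, the case $a,b,c<0$ being handled by the same argument after relabelling the coordinates, which merely reverses the orientation of the cycle found below. First I would locate the fixed points of $V$ in $S^2$: the three vertices $e_1,e_2,e_3$ are always fixed, and solving $ax_2-bx_3=-ax_1+cx_3=bx_1-cx_2=0$ shows that there is exactly one interior fixed point, with coordinates proportional to $(c,b,a)$, all strictly positive precisely because $a,b,c$ have the same sign.

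Next I would analyse the induced dynamics on each edge of the simplex. On the edge $\{x_3=0\}$ the operator reduces to the one-dimensional map $x_1\mapsto x_1(1+ax_2)$, $x_2\mapsto x_2(1-ax_1)$, so with $a>0$ every interior point of that edge is driven toward $e_1$; similarly $\{x_1=0\}$ pushes points toward $e_2$ and $\{x_2=0\}$ pushes points toward $e_3$. Consequently each vertex is attracting along one incident edge and repelling along the other, i.e.\ each is a saddle of the planar dynamics, and the three edges assemble into a heteroclinic cycle $e_1\to e_3\to e_2\to e_1$ on the boundary of $S^2$.

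To control the interior I would introduce the Lyapunov-type function $\varphi(x)=x_1^{\,c}x_2^{\,b}x_3^{\,a}$. Computing $\ln\varphi(Vx)-\ln\varphi(x)=c\ln(1+ax_2-bx_3)+b\ln(1-ax_1+cx_3)+a\ln(1+bx_1-cx_2)$, one checks that the choice of exponents $(c,b,a)$ makes the first-order terms cancel identically, so the increment is governed by the second-order contribution $-\tfrac12\bigl[c(ax_2-bx_3)^2+b(-ax_1+cx_3)^2+a(bx_1-cx_2)^2\bigr]$ (up to higher order), which is strictly negative off the fixed points. Hence $\varphi$ attains its maximum at the interior fixed point and strictly decreases along every other interior trajectory; since $\varphi$ vanishes on the boundary, this rules out convergence to the interior fixed point and forces every interior orbit to accumulate on the boundary heteroclinic cycle.

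The decisive and hardest step is to convert this qualitative picture into non-convergence of the Cesàro averages $\frac1n\sum_{k=0}^{n-1}V^k x$. Because each vertex is a saddle for the boundary dynamics, an orbit shadowing the cycle lingers near $e_1$, then near $e_3$, then near $e_2$, and the passage time near each successive vertex grows without bound as the orbit is drawn closer to the cycle. I would make this quantitative by estimating, in a fixed neighbourhood of each vertex, the number of iterations required to traverse it, and showing these dwell times grow fast enough that along one subsequence of indices the average sits near $e_1$ while along another it sits near $e_3$. Since $e_1\neq e_3$, the two subsequential limits of $\frac1n\sum_{k=0}^{n-1}V^k x$ differ, so the limit fails to exist and the ergodic theorem fails. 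The main obstacle is precisely this estimate of the growing dwell times near the saddles and the verification that they yield genuinely distinct subsequential limits of the time averages, which constitutes the technical core of the argument.
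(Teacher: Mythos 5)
The paper itself offers no proof of this statement: it is imported wholesale from \cite{Nasr} (see also Zakharevich \cite{Zak} for the case $a=b=c=1$), so there is no internal argument to compare against; your outline has to be measured against that reference, and it is in fact a faithful roadmap of exactly that strategy (unique interior fixed point proportional to $(c,b,a)$, saddle behaviour at the vertices, the boundary heteroclinic cycle $e_1\to e_3\to e_2\to e_1$, a multiplicative Lyapunov function, and oscillation of the Ces\`aro means caused by growing dwell times). Your preliminary steps are essentially correct, with one repair needed: the monotonicity of $\varphi(x)=x_1^{c}x_2^{b}x_3^{a}$ cannot be deduced from the second-order Taylor expansion, which is only a local statement (indeed for $t>0$ one has $\ln(1+t)\ge t-t^{2}/2$, the wrong direction). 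The global statement follows instead from concavity of $\ln$: since the first-order identity $c(ax_2-bx_3)+b(-ax_1+cx_3)+a(bx_1-cx_2)=0$ holds, Jensen's inequality with weights proportional to $(c,b,a)$ gives $\ln\varphi(Vx)-\ln\varphi(x)\le 0$, with equality only at the interior fixed point; combined with the fact that this fixed point is the unique maximizer of $\varphi$ on $S^{2}$, this does rule out convergence to it and forces $\varphi(V^{n}x)\to 0$, i.e.\ accumulation on the boundary, as you claim.

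The genuine gap is your final paragraph, which is a declaration of intent rather than an argument, and it is precisely where the whole content of the theorem sits. Accumulation of the orbit on the boundary cycle does not by itself contradict ergodicity: if the sojourn times near successive vertices grew slowly relative to the total elapsed time, the averages $\frac1n\sum_{k=0}^{n-1}V^{k}x$ could perfectly well converge (for instance to a fixed convex combination of the vertices). What must actually be proved is quantitative: near each vertex the repelling coordinate is expanded per step by a factor like $1+b$ (resp.\ $1+c$, $1+a$) while the attracting one is contracted by $1-a$ (resp.\ $1-b$, $1-c$), so the length of the $k$-th sojourn is bounded below by a fixed multiple, strictly larger than $1$, of the length of the previous one --- hence eventually each single sojourn dominates the entire past, and the time averages along one subsequence are dragged near one vertex and along another subsequence near a different vertex. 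You state none of these estimates: no choice of neighbourhoods, no bound on the dwell times, no verification that the same-sign hypothesis on $a,b,c$ is what makes the expansion/contraction ratios compound geometrically around the cycle, and no proof that the two subsequential limits of the averages are distinct. Since this is the technical core of \cite{Nasr} and \cite{Zak}, the proposal as written is an accurate plan of the known proof but not a proof.
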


Unfortunately, up to now, there is no general theorem to clarify the non-ergodicity of any Volterra QSO in higher dimensions.  Some particular cases  (in low dimensions) have been investigated in \cite{GGJ}. 
Therefore, it would be natural at least to predict
non-ergodic behavior of Volterra QSO in higher dimensions. 

\section{Volterra Evolution Algebra }

Let us first recall the definition of evolution algebra. 

\begin{defn} Let $\cE$ be a vector space over a field $\mathbb K$ with multiplication
 $\cdot$ and a basis $\{ e_1, e_2, . . ., e_n \}$ such that $$e_i\cdot e_j= 0, \ i\neq j,$$
$$e_i\cdot e_i=\sum_{k=1}^n a_{ik} e_k, \ i \geq 1,$$ then $E$ is called \textit{evolution algebra} and
basis  $\{e_1,  e_2, . . ., e_n \}$ is said to be \textit{natural basis}. \end{defn}

From the above definition it follows that evolution algebras are commutative (therefore, flexible).

The matrix $A=(a_{ij})_{i,j=1}^{n}$ is called \textit{matrix of the algebra}
$\cE$ in natural basis $\{e_1,\dots ,e_n\}.$  If $A=(a_{ij})_{i,j=1}^{n}$ is a skew symmetric matrix, then this kind of  evolution algebra is called 
\textit{Voltera evolution algebra}.  Let $\bf{\cE}$ be a vector space over a field $\mathbb K$.
In what follows, we always assume that $\mathbb K$ has characteristic zero.
The conical form of  the table of multiplication of  
 Volterra evolution algebra $\bf{\cE}$ w.r.t. {\it natural basis} $\{ e_1,  e_2, . . ., e_n \}$ is given by
\begin{eqnarray}
&& e_i\cdot  e_j= 0,\ i\neq j; \\ \label{zero}
&&e_i\cdot  e_i=-\sum_{k<i}^{i-1}a_{ki} e_k+\sum_{k>i}^{n}a_{ik} e_k. 
\end{eqnarray}
We note that if $i=1$ then the first part of \eqref{zero} is zero.

In what follows, by $A=(a_{ij})^n_{i,j=1}$ we mean the matrix of the structural constants
 of the finite-dimensional Volterra evolution algebra $\bf{\cE}$, which is skew symmetric.
Obviously, $rank A =\dim(\bf{\cE}\cdot\bf{\cE})$. Hence, for finite-dimensional evolution algebra the
rank of the matrix does not depend on choice of natural basis. An Volterra evolution algebra is non-degenerate if $e_ie_i\neq 0$ for any $i.$
In what follows,  we will consider non-trivial Voltera evolution algebra and for convenience, we write ${\bf u}{\bf v}$ instead ${\bf u}\cdot{\bf v}$
for any ${\bf u},{\bf v}\in\bf{\cE}$ and we shall write $\bf{\cE}^2$ instead $\bf{\cE}\cdot\bf{\cE}$.

A linear map $\psi: {\bf \cE}_1\to{\bf \cE}_2$ is called an {\it homomorphism} of  evolution algebras
if $\psi({\bf u}{\bf v})=\psi({\bf u})\psi({\bf v})$ for any ${\bf u},{\bf v}\in{\bf \cE}_1$. Moreover, if $\psi$ is bijective, then it is called an
{\it isomorphism}. In this case, the last relation is denoted by ${\bf \cE}_1\cong{\bf \cE}_2$.\\
\vspace{2mm}

\section{Nilpotency of Volterra Evolution Algebras}

In this section we are going to establish that any Volterra Evolution Algebra is not nilpotent. We notice that many existing results  are devoted to nilpotent  on evolution algebras (see for example, \cite{EL, HA1,R5}).

Given a non-associative algebra $\bf{\cE}$, we introduce the following sequences of subspaces:
\begin{align*}
\bf{\cE}^{<1>} & = \bf{\cE},&  \bf{\cE}^{<k+1>} & = \bf{\cE}^{<k>}\bf{\cE};\\[-4pt]
\bf{\cE}^1 & = \bf{\cE}, & \bf{\cE}^{k+1} &=\sum_{i=1}^{k}\bf{\cE}^i\bf{\cE}^{k+1-i}.
\end{align*}

\begin{defn} An algebra $\bf{\cE}$ is called
\begin{romanenumerate}
\item \emph{right nilpotent} if there exists $n\in \NN$ such that $\bf{\cE}^{<n>} = 0$, and the minimal 
such number is called the \emph{index of right nilpotency};
\item \emph{nilpotent} if there exists $n\in \NN$ such that $\bf{\cE}^n = 0$, and the minimal such number 
is called the \emph{index of nilpotency}.
\end{romanenumerate}
\end{defn}
\begin{rk}
A commutative algebra is right nilpotent if and only if it is nilpotent (see \cite[Chapter 4, Proposition 1]{ZSSS}). This applies, in particular, to evolution algebras.
\end{rk}

\begin{defn}
Let $\bf{\cE}$ be an algebra. 
Consider the chain of 
ideals $\ann^i(\bf{\cE})$, $i\geq 1$, where:
\begin{itemize}
\item
 $\ann^1(\bf{\cE})\bydef\ann(\bf{\cE})\bydef\{x\in\bf{\cE}: x\bf{\cE}=\bf{\cE} x=0\}$,
\item
$ \ann^i(\bf{\cE})$ is defined by $\ann^i(\bf{\cE}) /\ann^{i-1}(\bf{\cE})\bydef\ann(\bf{\cE} /\ann^{i-1}(\bf{\cE}))$.
\end{itemize}
The chain of ideals:
\[
0=\ann^0(\bf{\cE})\subseteq \ann^1(\bf{\cE})\subseteq \cdots\subseteq \ann^r(\bf{\cE})\subseteq \cdots
\] 
is called the  \emph{the upper annihilating series}.
\end{defn}

As for Lie algebras, a nonassociative algebra $\bf{\cE}$ is nilpotent if and only if its upper annihilating series reaches $\bf{\cE}$. 
That is, if there exists $r$ such that $\ann^r(\bf{\cE})=\bf{\cE}$.

In the following result shows that any Voltera evolution algebra is not nilpotent.   

\begin{thm}
Let $\bf{\cE}$ be a Volterra evolution algebra then $\bf{\cE}$ is not nilpotent.
\end{thm}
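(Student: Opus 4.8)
The plan is to use the criterion recalled just before the statement: since $\bf{\cE}$ is commutative, it is nilpotent if and only if its upper annihilating series reaches $\bf{\cE}$, i.e. $\ann^r(\bf{\cE})=\bf{\cE}$ for some $r$. I would therefore aim to show that the annihilator $\ann(\bf{\cE})$ is trivial and then propagate this fact up the whole series.

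First I would compute $\ann(\bf{\cE})$ explicitly in the natural basis $\{e_1,\dots,e_n\}$. Writing an arbitrary element as $x=\sum_{i=1}^{n}x_ie_i$ and using $e_ie_j=0$ for $i\neq j$, its product with a basis vector collapses to $xe_j=x_j(e_je_j)$. Hence $x\in\ann(\bf{\cE})$ forces $x_j(e_je_j)=0$ for every $j$. For a non-degenerate Volterra evolution algebra each $e_je_j\neq 0$ (equivalently, the $j$-th row of the skew-symmetric matrix $A$ is nonzero), so $x_j=0$ for all $j$, giving $\ann(\bf{\cE})=0$.

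Next I would feed this into the upper annihilating series by an easy induction. Since $\ann^1(\bf{\cE})=0$, at each stage the quotient $\bf{\cE}/\ann^{r-1}(\bf{\cE})$ coincides with $\bf{\cE}$ itself, whose annihilator is again $0$; hence $\ann^r(\bf{\cE})=0$ for all $r$. As $\bf{\cE}\neq 0$, the series can never reach $\bf{\cE}$, and the cited criterion yields that $\bf{\cE}$ is not nilpotent.

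The annihilator computation is routine; the only real point to watch is the role of non-degeneracy. If zero rows of $A$ were allowed, the corresponding basis vectors $e_j$ would lie in $\ann(\bf{\cE})$ and the annihilator would no longer be trivial. The main obstacle is then to observe that skew-symmetry forces the corresponding columns to vanish as well, so that the quotient by this annihilator is again a non-degenerate Volterra evolution algebra with trivial annihilator; the series therefore stabilizes strictly below $\bf{\cE}$ as long as $A\neq 0$, and non-nilpotency persists.
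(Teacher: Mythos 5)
Your proof is correct, and it rests on the same foundation as the paper's: both arguments run through the upper annihilating series criterion, settle the non-degenerate case by showing $\ann(\cE)=0$ (so the whole series vanishes and never reaches $\cE\neq 0$), and then invoke skew-symmetry of $A$ to handle the degenerate case. Where you genuinely differ is the mechanism for the degenerate case. The paper fixes one basis vector $e_k$ with $e_k^2\neq 0$ and proves by induction on $r$ that $e_k\notin\ann^r(\cE)$: assuming the contrary forces $e_k^2=\sum_{i\in M^{(r)}}\alpha_i e_i$, where $M^{(r)}$ indexes the basis vectors lying in $\ann^r(\cE)$, and skew-symmetry then collapses $e_k^2$ to $0$, a contradiction. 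You instead show the series stabilizes after one step: $\ann(\cE)$ is spanned by the $e_j$ whose rows of $A$ vanish, skew-symmetry kills the corresponding columns, so $\cE/\ann(\cE)$ is again a non-degenerate Volterra evolution algebra with zero annihilator, whence $\ann^r(\cE)=\ann^1(\cE)\subsetneq\cE$ for all $r$ (strict containment because $\cE$ is non-trivial, $A\neq 0$). Your version is shorter and, notably, repairs two points the paper leaves implicit in its inductive step: that $\ann^r(\cE)$ is spanned by natural basis vectors (needed to write $e_k^2$ as a combination of the $e_i$ with $i\in M^{(r)}$), and that the columns of $A$ indexed by $M^{(r)}$ vanish when $r\geq 2$. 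Both facts are immediate once one knows $M^{(r)}=M^{(1)}$ and $\ann^r(\cE)=\ann^1(\cE)$ for every $r$, which is exactly what your stabilization argument establishes.
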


\begin{proof}
Let $B_{\bf{\cE}}=\{e_1,e_2,\ldots,e_n\}$ be a natural basis of $\bf{\cE}.$
 If $\bf{\cE}$ is non-degenerate then  $e_i^2\neq 0$  for any $1\leq i\leq n $ accordingly,  $ann(\bf{\cE})=0.$ We next claim  $ann^r (\bf{\cE})=0$ for any $r\in \NN.$  The proof is by induction on $r.$
For $r=2,$ using the fact  $e_i \in ann^2 (\bf{\cE})$ if and only if  $e_i^2\in ann(\bf{\cE}),$ but $e_i^2\neq 0$ for any $i,$ consequently  $e_i\notin ann^2 (\bf{\cE})$  for all $1\leq i\leq n $, so  $ann^2 (\bf{\cE})=0.$\\
For $r=k,$ we suppose our claim is true  that is  $ ann^k (\bf{\cE})=0,$ again by keeping in the mind the fact   $e_i\in ann^{(k+1)} (\bf{\cE})$ if and only if $e_i^2\in ann^k (\bf{\cE})$ since $\bf{\cE}$ is non-degenerate, i.e,  $e_i^2\neq 0$ for any $i.$ Hence, $e_i\notin ann^{(k+1)} (\bf{\cE}),$  for any $1\leq i\leq n,$ then we deduced that $ann^r (\bf{\cE})=0$  for any $r\in \NN.$ Therefore,  in the considered case  $\bf{\cE}$ is not nilpotent .

Now let us assume that  $\bf{\cE}$ is  degenerate,  then there is $1\leq i_0\leq n$ such that $e_{i_0}^2=0$ 
Hence,  $e_{i_0} \in ann(\bf{\cE}).$
Let us define the following set 
$$M^{(r)}:=\{i :\ \  e_i\in ann^r (\bf{\cE})\}$$

Since $\bf{\cE}$ is non-trivial then there is $k$ such that $e_k^2\neq 0$ so $e_k\notin ann(\bf{\cE})$  but $A$ is skew symmetric matrix, that is   $a_{i_{0}k}=-a_{ki_0 }=0$ 
for any $i_0\in M^{(1) }.$  We claim  $e_k \notin ann^r (\bf{\cE})$ for any $r\in \NN.$ Let us proof it by induction, for $r=2$ suppose by contrary $e_k\in ann^2 (\bf{\cE})$ then $e_k^2\in ann(\bf{\cE})$
This means that $e_k^2=\sum_{i\in M^{(1) }}\alpha_i e_i $ so 
$$\sum_{j\notin M^{(1) }}^{n}a_{kj}e_j=\sum_{i\in M^{(1) }}\alpha_i e_i $$
 We obtain $ a_{kj}=0$ for any $j\notin M^{(1)}$  keep in your mind $a_{ki_0 }=0$ for any $i_0 \in M^{(1)}$
Then we have $e_k^2=0$ which is contradiction to our assumption. Therefore, $e_k \notin ann^2 (\bf{\cE}).$
Suppose our claim is true for $r=k,$ i.e.  $e_k\notin ann^k (\bf{\cE}).$ 
Let $r=k+1$ and suppose by contrary $e_k\in ann^{(k+1)} (\bf{\cE})$ then we must have $ e_k^2\in ann^k (\bf{\cE})$
This implies that $e_k^2=\sum_{i\in M^{(k) }}\alpha_i e_i $ 
keep in your mind $a_{i_0 k}=-a_{ki_0 }=0 $ for any $i_0\in M^{(k)}.$  
Then 
$$\sum_{j\notin M^{(k) }}^{n}a_{kj}e_j=\sum_{i\in M^{(k) }}\alpha_i e_i $$
 Thus, $a_{kj}=0$ for any $j\notin M^{(k) }$ consequently, $e_k^2=0$ which is contradiction. Therefore, $e_k\notin ann^{(k+1)} (\bf{\cE}).$ Then there is no $r\in \NN$ such that $ann^r (\bf{\cE})=\bf{\cE}.$ Which means that  $\bf{\cE}$ is not nilpotent, this completes the proof.
\end{proof}

\section{Isomorphism of Some Volterra Evolution Algebras}

In this section we study isomorphisms of Volterra evolution algebras.

\begin{pro}\label{z1}
Let $\bf{\cE}_1,\bf{\cE}_2$ be two Volterra evolution algebras which are given by the following structural matrices
$$
\bf{A_\cE}_1=\left(
\begin{array}{ccc>{\columncolor{blue!20}}cccc}
0 & {a}_{12} & \ldots & 0 &\ldots & {a}_{1n}\\
-{a}_{12} & 0 & \ldots & 0 &\ldots & {a}_{2n}\\
\vdots & \vdots & \ddots & \vdots & \vdots & \vdots\\
\rowcolor{blue!20}
0 & 0 &\ldots &  0&\ldots &0\\
\vdots & \vdots & \vdots & \vdots & \vdots & \vdots \\
-{a}_{1n} & -a_{2n} & \ldots & 0 & \ldots & 0 
\end{array}\right)
$$ 
where $a_{ij}\neq 0$ for all $i\neq j, \ i,j\neq k $ 
$$
\bf{A_\cE}_2=\left(
\begin{array}{ccccc>{\columncolor{blue!20}}c}
0 & a'_{12} & \ldots & a'_{1,k+1} &\ldots & 0\\
-a'_{12} & 0 & \ldots & a'_{2,k+1} &\ldots & 0\\
\vdots & \vdots & \ddots & \vdots & \vdots & \vdots\\
-a'_{1,k+1} & -a'_{2,k+1} &\ldots &  0&\ldots &0\\
\vdots & \vdots & \vdots & \vdots & \vdots & \vdots \\
\rowcolor{blue!20}
0 & 0 & \ldots & 0 & \ldots & 0 
\end{array}\right)
$$ 
where $a'_{ij}\neq 0$ for all $1\leq i\neq j<n $. 
Then $\bf{\cE}_1 \cong \bf{\cE}_2$
\end{pro}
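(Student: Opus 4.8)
The plan is to produce an explicit isomorphism that carries the natural basis of $\bf{\cE}_1$ onto a rescaled, reindexed copy of the natural basis of $\bf{\cE}_2$. First I would record the common structural feature of the two matrices: the highlighted zero row and column force $e_k^2=0$ in $\bf{\cE}_1$ and $f_n^2=0$ in $\bf{\cE}_2$ (writing $\{f_1,\dots,f_n\}$ for the natural basis of $\bf{\cE}_2$). Hence in each algebra the degenerate vector spans the annihilator, and because both its row and its column vanish it splits off as a direct summand. Thus $\bf{\cE}_1\cong \K e_k\oplus\cN_1$ and $\bf{\cE}_2\cong\K f_n\oplus\cN_2$, where $\cN_1,\cN_2$ are the $(n-1)$-dimensional Volterra evolution subalgebras carried by the remaining basis vectors, each having all its off-diagonal structure constants nonzero. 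The two one-dimensional trivial summands match immediately, so the task reduces to constructing an isomorphism $\cN_1\to\cN_2$.

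For the reduced problem I would use a generalized-permutation ansatz: fix a bijection $\pi$ from the index set of $\cN_1$ onto that of $\cN_2$, set $\psi(e_i)=d_i f_{\pi(i)}$ with nonzero scalars $d_i$ for $i\neq k$, and $\psi(e_k)=f_n$. Since $f_lf_m=0$ for $l\neq m$, every cross relation $\psi(e_ie_j)=\psi(e_i)\psi(e_j)=0$ with $i\neq j$ holds automatically, so the only genuine conditions arise from comparing $\psi(e_i^2)$ with $\psi(e_i)^2$. Expanding both sides against the natural basis of $\bf{\cE}_2$ reduces the homomorphism requirement to the scalar system
\begin{equation}
a_{ij}\,d_j=d_i^2\,a'_{\pi(i)\pi(j)},\qquad i\neq j,\ i,j\neq k. \label{keysys}
\end{equation}
Here skew-symmetry enters decisively: pairing the relation for $(i,j)$ with the one for $(j,i)$ and invoking $a_{ji}=-a_{ij}$ together with $a'_{\pi(j)\pi(i)}=-a'_{\pi(i)\pi(j)}$ collapses the two equations into the single compatibility condition $d_j^3=d_i^3$, after which \eqref{keysys} determines the remaining scaling ratios. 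The nonvanishing of all the constants $a_{ij}$ and $a'_{\pi(i)\pi(j)}$, together with $\operatorname{char}\K=0$, is what lets one extract the needed roots and solve the system.

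Once the scalars $d_i$ are in hand, I would check bijectivity directly: the matrix of $\psi$ relative to the two natural bases is a permutation matrix scaled by the nonzero $d_i$ on the $\cN$-block and sending $e_k$ to $f_n$, so its determinant is nonzero. Combined with the homomorphism property this gives $\psi(\bf{\cE}_1)=\bf{\cE}_2$ and hence $\bf{\cE}_1\cong\bf{\cE}_2$.

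The step I expect to be the main obstacle is exactly the solvability of \eqref{keysys}. The system is over-determined once $n-1\geq 3$: the skew-symmetric redundancy forces all $d_i^3$ to coincide, but the leftover equations then pin down every ratio $a'_{\pi(i)\pi(j)}/a_{ij}$ in terms of a single scaling, and for arbitrarily chosen nonzero constants these ratios need not be mutually consistent. The crux is therefore to select $\pi$ and exploit this skew-symmetric redundancy (and, where necessary, the root extraction available over a characteristic-zero field) so that the constraints coming from \eqref{keysys} can be simultaneously met; this is the only part of the argument that is not purely formal, and it is precisely where the hypothesis that all off-diagonal entries are nonzero must be used.
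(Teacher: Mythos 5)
Your reduction is sound as far as it goes: the zero row and column do force $e_k^2=0$ and $f_n^2=0$, the annihilators are exactly $\K e_k$ and $\K f_n$, the algebras split as $\cE_1=\K e_k\oplus\cN_1$ and $\cE_2=\K f_n\oplus\cN_2$, and for a monomial map $\psi(e_i)=d_if_{\pi(i)}$ the homomorphism condition is precisely your scalar system $a_{ij}d_j=d_i^2a'_{\pi(i)\pi(j)}$, whose skew-symmetric pairing gives $d_i^3=d_j^3$. But the proposal stops exactly where the proof has to happen: you never solve that system, and your suspicion that it is over-determined is fatal rather than technical. If all the $d_i^3$ coincide (say $d_i=d$ for every $i$), the system forces $a'_{\pi(i)\pi(j)}=a_{ij}/d$ for all $i\neq j$, i.e.\ the two nondegenerate blocks must be proportional up to a relabelling of indices --- which is exactly the isomorphism criterion of Theorem \ref{isom1} and Corollary \ref{isoth}, and it fails for generic nonzero entries. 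Concretely, take $n=4$, $k=4$, with nondegenerate blocks $(a_{12},a_{13},a_{23})=(1,1,1)$ and $(a'_{12},a'_{13},a'_{23})=(1,1,2)$: both algebras satisfy the hypotheses of Proposition \ref{z1}, yet any isomorphism $\cE_1\to\cE_2$ carries annihilator to annihilator and hence induces an isomorphism $\cN_1\cong\cN_2$, which Corollary \ref{isoth} rules out (no relabelling makes the three ratios $1,1,2$ all equal). So the gap in your argument cannot be closed: the statement is false at this level of generality, and no choice of $\pi$ and $d_i$ will help.

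For what it is worth, the paper's own proof has the same defect, only hidden: it exhibits the single map $\psi(f_i)=e_i/a_{12}$ with the indices $k$ and $n$ swapped, and declares it ``obviously'' an isomorphism (never mind that $a_{12}=0$ when $k\in\{1,2\}$). Writing out $\psi(f_i^2)=\psi(f_i)^2$ shows this map is a homomorphism only if $a_{ij}=a_{12}\,a'_{ij}$ for all relevant pairs (under the swap $k\leftrightarrow n$); at $(i,j)=(1,2)$ this already forces $a'_{12}=1$, a relation nowhere among the hypotheses. Your analysis, which makes the needed compatibility explicit instead of burying it in the word ``obviously'', is thus more careful than the paper's; the correct conclusion to draw from it is that Proposition \ref{z1} requires an additional hypothesis --- proportionality of the two families of structure constants under some bijection of indices --- and under that hypothesis your monomial ansatz with constant $d_i=d$ does complete the proof.
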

\begin{proof}
Let us defined $\psi : \bf{\cE}_2\to \bf{\cE}_1$ as follows 
$$
\psi(f_i)= \begin{dcases}
\frac{e_i}{a_{12}} & if \  i\neq m,\ 1\leq i<n; \\
\frac{e_n}{a_{12}} & if \  i=m; \\
 \frac{e_m}{a_{12}}& if \ i=n.
\end{dcases}
$$
Then it obviously $\psi$ is an isomorphism between $\bf{\cE}_1$ to $\bf{\cE}_2.$
\end{proof}
\begin{pro}\label{D}
Let $\bf{\cE}_1,\bf{\cE}_2$ be two Voltera evolution algebras which are given be the following structural matrices
\newcommand\hlight[1]{\tikz[overlay, remember picture,baseline=-\the\dimexpr\fontdimen22\textfont2\relax]\node[rectangle,fill=blue!50,rounded corners,fill opacity = 0.2,draw,thick,text opacity =1] {$#1$};}
$$
\bf{\cE}_1=\left(
\begin{array}{cccc}
0 & \hlight {0} & \ldots &  \hlight {a}\\
\hlight {0} & 0 & \ldots &   b\\
\vdots & \vdots & \ddots & \vdots\\
\hlight {-a} & -b & \ldots & 0  
\end{array}\right)
$$ 
where $ab\neq 0.$
$$
\bf{\cE}_2=\left(
\begin{array}{cccc}
0 & \hlight {1} & \ldots &   \hlight {0}\\
\hlight {-1} & 0 & \ldots &   b'\\
\vdots & \vdots & \ddots & \vdots\\
\hlight {0} & -b' & \ldots & 0  
\end{array}\right)
$$ 
where $b'\neq 0.$
Then $\bf{\cE}_1 \cong \bf{\cE}_2$ 
\end{pro}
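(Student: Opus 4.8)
The plan is to produce an explicit isomorphism of the same monomial type used in the proof of Proposition \ref{z1}: a linear map sending each natural basis vector of $\bf{\cE}_1$ to a nonzero scalar multiple of a single natural basis vector of $\bf{\cE}_2$. The guiding observation is that the two structural matrices encode the same ``shape''. In $\bf{\cE}_1$ the only nonzero entries are $a_{1n}=a$ and $a_{2n}=b$ (together with their skew partners), so by \eqref{zero} one has $e_1^2=a\,e_n$, $e_2^2=b\,e_n$ and $e_n^2=-a\,e_1-b\,e_2$; thus the index $n$ is a distinguished vertex joined to $1$ and $2$. Symmetrically, in $\bf{\cE}_2$ the only nonzero entries are $a'_{12}=1$ and $a'_{2n}=b'$, giving $f_1^2=f_2$, $f_2^2=-f_1+b'f_n$ and $f_n^2=-b'f_2$, so now the central vertex is the index $2$, joined to $1$ and $n$. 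Hence the permutation underlying the isomorphism should interchange the roles of $2$ and $n$ so as to match the two central indices, while fixing the isolated indices $3,\dots,n-1$.

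Concretely, I would search for $\psi:\bf{\cE}_1\to\bf{\cE}_2$ of the form
\[
\psi(e_1)=\alpha f_1,\qquad \psi(e_2)=\beta f_n,\qquad \psi(e_n)=\gamma f_2,\qquad \psi(e_i)=f_i\ \ (3\le i\le n-1),
\]
with nonzero scalars $\alpha,\beta,\gamma$ to be determined. Such a $\psi$ sends a basis to a basis (up to nonzero scaling), so it is automatically bijective, and since $e_ie_j=0$ for $i\neq j$ in both algebras, multiplicativity reduces to verifying $\psi(e_i^2)=\psi(e_i)^2$ for each single index $i$. Expanding these identities via \eqref{zero} and comparing coefficients of the $f_k$ collapses the whole problem to the scalar system
\[
a\gamma=\alpha^2,\qquad a\alpha=\gamma^2,\qquad b\gamma=-b'\beta^2,\qquad b\beta=-b'\gamma^2,
\]
the indices $3,\dots,n-1$ contributing only the trivial identity $0=0$.

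I would then solve this system. The first two relations give $\alpha^3=\gamma^3=a^3$, so over a field of characteristic zero one may take $\alpha=\gamma=a$. Substituting $\gamma=a$ into the last two relations determines $\beta$ and produces the single genuine compatibility condition linking the parameters: a short computation turns $b\gamma=-b'\beta^2$ and $b\beta=-b'\gamma^2$ into $(b'a)^3=-b^3$, whence $b'=-b/a$ and $\beta=a$. Checking that this over-determined system is actually consistent --- i.e. that the values of $\alpha,\beta,\gamma$ forced by the two ``leaf'' equations are compatible with the two ``center'' equations --- is the only real content of the argument and the step I expect to be the main obstacle; everything else (linearity, bijectivity, the vanishing on the isolated indices) is routine. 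Once this consistency is verified, $\psi$ is the sought isomorphism and $\bf{\cE}_1\cong\bf{\cE}_2$.
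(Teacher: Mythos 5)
You take the same route as the paper's own proof: a monomial map whose underlying permutation swaps the two ``hub'' indices ($n$ in $\cE_1$, $2$ in $\cE_2$) and fixes the isolated ones. Indeed, the paper's map $\psi(f_i)=e_i/a$ ($i\neq 2,n$), $\psi(f_2)=e_n/a$, $\psi(f_n)=e_2/a$ is essentially your map read in the opposite direction with $\alpha=\beta=\gamma=a$. Your reading of the two multiplication tables and the scalar system
\[
a\gamma=\alpha^2,\qquad a\alpha=\gamma^2,\qquad b\gamma=-b'\beta^2,\qquad b\beta=-b'\gamma^2
\]
are both correct. The gap is the final step, ``whence $b'=-b/a$''. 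The parameter $b'$ is \emph{given} in the hypotheses (any $b'\neq 0$ is allowed), so the relation $(ab')^3=-b^3$ you derive is not an equation you may solve for $b'$: it is a genuine compatibility condition on the data, and when it fails your system is inconsistent and the proposed $\psi$ does not exist. What your argument actually proves is the conditional statement: \emph{if} $(ab')^3=-b^3$ (equivalently $b'=-b/a$ over fields such as $\R$ where cube roots are unique), \emph{then} $\cE_1\cong\cE_2$. That is strictly weaker than the proposition.

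Moreover, this cannot be repaired by a cleverer choice of isomorphism, because the proposition as literally stated is false. Take $n=3$ and $a=b=b'=1$. Then $\cE_1^2=\espan{e_3,\, e_1+e_2}$, and since $(e_1+e_2)^2=2e_3$ and $e_3^2=-(e_1+e_2)$, the space $\cE_1^2\cdot\cE_1^2$ is $2$-dimensional; on the other side $\cE_2^2=\espan{f_2,\,-f_1+f_3}$, and $(-f_1+f_3)^2=f_1^2+f_3^2=f_2-f_2=0$ while $f_2^2=-f_1+f_3$, so $\cE_2^2\cdot\cE_2^2=\espan{-f_1+f_3}$ is $1$-dimensional. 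Any isomorphism carries $\cE_1^2\cdot\cE_1^2$ onto $\cE_2^2\cdot\cE_2^2$, so $\cE_1\not\cong\cE_2$ here --- and indeed $(ab')^3=1\neq -1=-b^3$, exactly the case your constraint excludes. Note that the paper's one-line proof has the very same defect: its ``obvious'' isomorphism satisfies $\psi(f_2^2)=\psi(f_2)^2$ only when $b'=-b/a$. So the condition you uncovered is not a routine consistency check to be waved through; it is the missing hypothesis under which the proposition is true, and your write-up should have flagged it as a restriction on the statement rather than absorbed it silently.
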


 \begin{proof}
Let us defined $\psi : \bf{\cE}_2\to \bf{\cE}_1$ as follows 
$$
\psi(f_i)= \begin{dcases}
\frac{e_i}{a} & if \  i\neq 2,\ 1\leq i<n; \\
\frac{e_n}{a} & if \  i=2; \\
 \frac{e_2}{a}& if \ i=n.
\end{dcases}
$$
Then it obviously $\psi$ is an isomorphism between $\bf{\cE}_1$ to $\bf{\cE}_2.$
\end{proof}

Now, let us turn to the main aim of this paper which sheds some light into a relation between Volterra evolution algebra and dynamics of Volterra QSO.

Let $A$ be an $n\times n$ skew symmetric matrix. 
Let us define the following block skew symmetric matrices $m\times m$ of $A.$ Let $A_{m \times m}$ be  the leading principle matrix of order $m.$
By $A_{[j,j]}^{[i,i]}$ we denote $m\times m$ a skew symmetric matrix, which is obtained from $A_{m \times m}$ by deleting $i^{th}$ row and column, and adding $j^{th}$ row and  column from $A$ to the $m^{th}$ row and column, respectively, to 
the obtained one.  

Suppose that we have $n-$dimensional Volterra evolution  algebra $\cE$ with structural matrix $A_\cE$  with $rank(A)=m,$ and let $a_{ij}\neq 0$ for any $1\leq i\neq j \leq n.$  Fix $k$ such that $m<k \leq n.$
 Then 
 $$e_k^2=A_1^{(k)} e_1^2+A_2^{(k)} e_2^2+\ldots+A_m^{(k)} e_m^2,$$ where 
 $$A_i^{(k)}=\sqrt{\frac{det(A_{[k,k]}^{[i,i]})}{det(A_{m \times m})}}$$
 
 \begin{rk}
 Since $rank(A)=m$ then there is a least one skew symmetric matrix of order $m$ such that its determinate not zero, hence without loss of generality,  we may always assume $det(A_{m \times m}) )\neq 0.$ 
 \end{rk} 
 
 Let us consider some concrete example. 
 
 \begin{ex}
 Let $A_{6\times6}$ be a skew symmetric matrix with rank $4.$  
Then the leading principle skew symmetric matrix of order $4$ as follows 
 $$
A_{4\times 4}=\left(
\begin{array}{cccc}
0 & a_{12} & a_{13} &   a_{14}\\
-a_{12} & 0 & a_{23} &   a_{24}\\
-a_{13} & -a_{23} & 0 & a_{34}\\
-a_{14} & -a_{24} & -a_{34} & 0  
\end{array}\right)
$$ 
Now,  
$A_{[5,5]}^{[1,1]}$  is a $4\times 4$ skew symmetric matrix, which is obtained from $A_{4 \times 4}$ by deleting $1^{th}$ row and column and adding the rest entries from $5^{th}$ row and  column of $A_{6\times 6}$ as follows 
 $$
A_{[5,5]}^{[1,1]}=\left(
\begin{array}{cccc}
0 & a_{23} & a_{24} &   a_{25}\\
-a_{23} & 0 & a_{34} &   a_{35}\\
-a_{24} & -a_{34} & 0 & a_{45}\\
-a_{25} & -a_{35} & -a_{45} & 0  
\end{array}\right)
$$ 
Now, if we have $6-$ dimensional Voltera evolution algebra with rank of $A$ is $4$, then 
we can write $e_5^2=A_1^{(5)}  e_1^2+A_2^{(5)} e_2^2+A_3^{(5)} e_3^2+A_4^{(5)}  e_4^2$
where $$A_1^{(5)}=\sqrt{\frac{det(A_{[5,5]}^{[1,1]})}{det(A_{4\times 4})}}, \ \ A_2^{(5)}=\sqrt{\frac{det(A_{[5,5]}^{[2,2]})}{det(A_{4\times 4})}},$$
$$A_3^{(5)}=\sqrt{\frac{det(A_{[5,5]}^{[3,3]})}{det(A_{4\times 4})}}, \ \ A_4^{(5)}=\sqrt{\frac{det(A_{[5,5]}^{[4,4]})}{det(A_{4\times 4})}}.$$
 \end{ex}
 
 \begin{thm}\label{isom1}
 Let $\cE_1$ and $\cE_2$ be two Volterra evolution algebras with $a_{ij}b_{lm}\neq 0 $ for any $1\leq i\neq j \leq n,\ 1\leq l\neq m \leq n$. 
 Then $\cE_1\cong \cE_2 $ if and only if $\frac{a_{ij}}{b_{ij}} =\frac{a_{lm}}{b_{lm}}.$ 
 \end{thm}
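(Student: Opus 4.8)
The plan is to reduce the isomorphism question to a statement about \emph{natural bases} and then exploit skew-symmetry. The key preliminary observation is that if $\psi\colon\cE_1\to\cE_2$ is an isomorphism and $\{e_1,\dots,e_n\}$ is the natural basis of $\cE_1$, then the vectors $g_i\bydef\psi(e_i)$ automatically form a natural basis of $\cE_2$: indeed $g_ig_j=\psi(e_ie_j)=0$ for $i\neq j$, which is precisely the defining condition of a natural basis, and moreover $g_i^2=\psi(e_i^2)=\sum_k a_{ik}g_k$. Hence $\{g_i\}$ is a natural basis of $\cE_2$ whose structural matrix is exactly $A=(a_{ik})$. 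Consequently $\cE_1\cong\cE_2$ if and only if $\cE_2$ admits a natural basis whose (necessarily skew-symmetric) structural matrix equals $A$; that is, $A$ and $B$ lie in the same orbit under a change of natural basis.

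For the ``if'' direction, assume the common ratio $\lambda\bydef a_{ij}/b_{ij}$ exists, so $a_{ik}=\lambda b_{ik}$ for all $i\neq k$. I would simply exhibit $\psi\colon\cE_1\to\cE_2$, $\psi(e_i)=\lambda f_i$, and verify multiplicativity directly: $\psi(e_i^2)=\lambda\sum_k a_{ik}f_k=\lambda^2\sum_k b_{ik}f_k$, while $\psi(e_i)^2=\lambda^2 f_i^2=\lambda^2\sum_k b_{ik}f_k$, and $\psi(e_ie_j)=0=\psi(e_i)\psi(e_j)$. Since $\psi$ rescales a basis it is bijective, and over a field of characteristic zero no root extraction is needed.

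For the ``only if'' direction, write $g_i=\sum_p c_{ip}f_p$ with $C=(c_{ip})$ invertible. Comparing the two expressions for the products yields, from $g_i^2=\sum_k a_{ik}g_k$, the identity $AC=C^{[2]}B$ (where $C^{[2]}$ is the entrywise square), and, from $g_ig_j=0$ for $i\neq j$, the relations $\sum_p c_{ip}c_{jp}b_{pk}=0$ for all $k$; equivalently each vector $u^{(ij)}\bydef(c_{ip}c_{jp})_p$ lies in $\ker B$. The crucial structural step is to prove that $C$ must be \emph{monomial}, i.e.\ $g_i=d_if_{\sigma(i)}$ for a permutation $\sigma$ and nonzero scalars $d_i$; here the hypothesis that every off-diagonal entry of $B$ is nonzero is indispensable. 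Once $C$ is monomial, substitution gives the structural matrix of $\cE_2$ in the basis $\{g_i\}$ as $a_{il}=(d_i^2/d_l)\,b_{\sigma(i),\sigma(l)}$, and imposing skew-symmetry of this matrix forces $d_i^3=d_l^3$, so all $d_i$ coincide with a single scalar $d$ and $a_{il}=d\,b_{\sigma(i),\sigma(l)}$. After relabeling the natural basis of $\cE_2$ so that $\sigma=\mathrm{id}$, this reads $a_{il}=d\,b_{il}$, i.e.\ $a_{ij}/b_{ij}=d$ is independent of $i,j$, as claimed.

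I expect the main obstacle to be exactly the rigidity step, namely proving that $C$ is monomial. The conditions $u^{(ij)}\in\ker B$ do not by themselves give $u^{(ij)}=0$, since a skew-symmetric $B$ is singular in odd dimension, so one cannot merely invert $B$; instead one must combine these relations across all pairs $(i,j)$ with the invertibility of $C$ and with the all-nonzero hypothesis on the entries of $B$. A natural way to organize this is a support argument: assuming two rows of $C$ shared a coordinate in their supports, the nonvanishing of the entries of $B$ propagates nonzero products through the relations $u^{(ij)}\in\ker B$ and contradicts the linear independence of the rows of $C$, forcing distinct rows to have disjoint (hence singleton) supports. Controlling the interaction with the possibly nontrivial kernel of $B$ is the delicate point; the remaining computations are routine.
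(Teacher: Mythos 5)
Your ``if'' direction is correct and is essentially the paper's (a uniform rescaling of a natural basis), and in the nondegenerate case your ``only if'' argument is also complete and matches the paper's Case~1: when $\det B\neq 0$ the relations $u^{(ij)}\in\ker B=0$ give $c_{ip}c_{jp}=0$, so $C$ is monomial, and the rest is your $d_i^3=d_l^3$ computation. The genuine gap is exactly the step you flag yourself: when $B$ is singular (automatic for odd $n$, since $B$ is skew-symmetric), you never prove that $C$ is monomial, and the pairwise support argument you sketch cannot work. Concretely, take $n=3$ with $b_{12}=b_{13}=b_{23}=1$; then $\ker B$ is spanned by $v=(1,-1,1)$, and the rows $c_1=(1,1,1)$ and $c_2=(1,-1,1)$ are linearly independent, share \emph{full} support, and satisfy $u^{(12)}=(1,-1,1)\in\ker B$. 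So nonvanishing of all entries of $B$, the relation $u^{(ij)}\in\ker B$, and independence of the two rows are simultaneously consistent with shared support: no contradiction ``propagates'' at the pairwise level. The obstruction in this example only appears globally --- any third row whose entrywise products with both $c_1$ and $c_2$ lie in $\ker B$ is forced to be zero --- so a correct rigidity proof must use all $n$ rows, the invertibility of $C$, and the hypothesis on $A$ as well as on $B$. That argument is the actual mathematical content of the singular case, and it is missing from your proposal.

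For comparison, the paper does not prove monomiality in the singular case either; it splits on the rank of the structure matrix, gets monomiality for free in the full-rank case (linear independence of the $e_k^2$), and in the degenerate case switches to a completely different argument built on zero divisors and the determinant coefficients $A_i^{(k)}$, comparing the linear systems these produce in $\cE_1$ and $\cE_2$. (To be fair, that argument implicitly aligns $\psi(e_i)$ with the natural basis of $\cE_2$, so the degenerate case is treated rather loosely in the paper as well; your proposal at least isolates the precise statement that needs proof.) One further shared blemish: both you and the paper conclude equality of scalars from equality of their cubes, which is valid only over fields with no nontrivial cube roots of unity (e.g.\ $\mathbb{R}$). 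Over $\mathbb{C}$, rescaling a natural basis by $1,\omega,\omega^2$ with $\omega^3=1$ yields isomorphic Volterra algebras whose entry ratios $a_{ij}/b_{ij}$ are \emph{not} constant, so the theorem as stated actually needs that field restriction; this is a defect of the statement, not specific to your write-up. As it stands, your proposal establishes the ``if'' direction and the full-rank half of the ``only if'' direction, but leaves the singular case open.
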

 \begin{proof}
 $(\Rightarrow)$ This part of proof is divided into two cases depending on the rank of the structural matrix of $\cE_1$. Suppose by contrary, 
 there are $i_0, j_0, l_0, m_0$ such that $\frac{a_{i_0j_0}}{b_{i_0j_0}} \neq \frac{a_{l_0m_0}}{b_{l_0m_0}}.$
 
\textbf{Case} 1: Let $rank({\cE}_1)=n,$ as $\cE_1\cong \cE_2 $ then there is an isomorphism from $\cE_1$ onto $\cE_2$ defined as follows: $f_i=\sum_{j=1}^{n}\alpha_{ij}e_j.$ For any $i\neq j$ one has  $f_if_j=0,$ 
this implies that  $\sum_{k=1}^{n}\alpha_{ik}\alpha_{jk}e_k^2=0$. Due to $rank({\cE}_1)=n$ we obtain $\alpha_{ik}\alpha_{jk}=0$ for all $1\leq k\leq n$. Then without loss of generality, 
we may assume that $\alpha_{ii}\neq 0$ for any $1\leq i \leq n.$ So,  one gets $f_i=\alpha_{ii} e_i$, which yields $$f_i^2=\alpha_{ii}^2 \Bigg(\sum_{j<i}^{n}-a_{ji}e_j+\sum_{j>i}^{n}a_{ij}e_j \Bigg)$$
Then $\alpha_{ii}^2 a_{ij}=\alpha_{jj} b_{ij}.$  From $f_j^2=\alpha_{jj}^2 e_j^2$ we obtain $\alpha_{jj}^2 a_{ij}=\alpha_{ii} b_{ij}$,  this implies that $\alpha_{ii}=\alpha_{jj}$ for any $1\leq i \neq j \leq n.$
Since $\alpha_{ii}^2 a_{ij}=\alpha_{jj} b_{ij}$ then we have $\alpha_{ii}=\frac{b_{ij}}{a_{ij}}.$
 Now, let $i=i_0,\ j=j_0$ we have $\alpha_{i_0i_0}=\alpha_{j_0j_0}=\frac{b_{i_0j_0}}{a_{i_0j_0}}.$ Next  $i=l_0,\ j=m_0$ one finds $\alpha_{l_0l_0}=\alpha_{m_0m_0}=\frac{b_{l_0m_0}}{a_{l_0m_0}}$. Moreover, 
 $\alpha_{i_0i_0}=\alpha_{l_0l_0}$ since $\alpha_{ii}=\alpha_{jj}$ for any $1\leq i \neq j \leq n.$ This gives $\frac{b_{i_0j_0}}{a_{i_0j_0}}=\frac{b_{l_0m_0}}{a_{l_0m_0}}$ which contradicts to our assumption. 
 Therefore, in considered case  if $\cE_1\cong \cE_2 $ then $\frac{a_{ij}}{b_{ij}} =\frac{a_{lm}}{b_{lm}}.$
 
\textbf{Case} 2: Let $rank(\cE_1)=m<n,$ and  $x=\sum_{i=1}^{n}\alpha_ie_i$ be  a zero divisor in $\cE_1$ with $\prod_{i=1}^{n}\alpha_i\neq 0,$ i.e. there exists a non zero element $y=\sum_{i=1}^{n}\beta_ie_i$ in $\bf{\cE}_1$ such that $xy=0$. This together with the fact $e_k^2=\sum_{j=1}^{m}A_j^{(k)} e_j^2$, (for any  $m<k\leq n$) yields that 
\begin{equation}\label{3}
\alpha_i \beta_i+\sum_{l=m+1}^{n}\alpha_l \beta_l A_i^{(l)}=0  
\end{equation}
Since $y$ is non-zero, then we conclude that at least one of  $A_i^{(l)}$ is non zero. According to $\cE_1 \cong \cE_2 $ then there is an isomorphism  $\psi: \cE_1 \to \cE_2 $, hence we have  
$\psi(xy)=\psi(x)\psi(y)=\sum_{i=1}^{n}\alpha_i\beta_i(\psi(e_i))^2=0$, this gives the following system 
\begin{equation}\label{4}
 \alpha_i \beta_i+\sum_{l=m+1}^{n}\alpha_l \beta_l B_i^{(l)}=0
\end{equation}
From \eqref{3} and \eqref{4} one finds $\frac{A_i^{(p)}}{B_i^{(p)}}=\frac{A_i^{(q)}}{B_i^{(q)}}$ for any $m<p\neq q\leq n.$ This is possible only when $\frac{a_{ij}}{b_{ij}} =\frac{a_{lm}}{b_{lm}}$ 
for all $1\leq i\neq j \leq n,\ 1\leq l\neq m \leq n$ but this contradicts to our assumption.
Therefore, in considered case  if $\cE_1\cong \cE_2 $ then $\frac{a_{ij}}{b_{ij}} =\frac{a_{lm}}{b_{lm}}.$

$(\Leftarrow)$ By the following change of basis  
$f_i=\frac{a_{ij}}{b_{ij}}e_i,$
and tby means of the hypothesis we obtain $\cE_1 \cong \cE_2.$ This completes the proof. 
 \end{proof}
 
\begin{cor}\label{isoth}
Let $\cE_1$ and $\cE_2$ be two Volterra evolution algebras with the following structural matrices
$$
{\bf{A_\cE}}_1=\left(
\begin{array}{ccc}
0 & a_1 & -b_1\\
-a_1 & 0 & c_1 \\
b_1 & -c_1 & 0   
\end{array}\right)
,\ \ 
{\bf{A_\cE}}_2=\left(
\begin{array}{ccc}
0 & a_2 & -b_2\\
-a_2 & 0 & c_2 \\
b_2 & -c_2 & 0   
\end{array}\right) 
$$
respectively, where $a_1b_1c_1\neq 0, \ a_2b_2c_2\neq 0.$ Then $\cE_1 \cong \cE_2$ if and only if $\frac{a_2}{a_1}=\frac{b_2}{b_1}=\frac{c_2}{c_1}.$  
\end{cor}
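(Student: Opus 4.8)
The plan is to recognize this corollary as the special case $n=3$ of Theorem~\ref{isom1}, so that the entire argument reduces to matching notation and keeping track of signs. First I would read off the off-diagonal entries of the two structural matrices in the notation of Theorem~\ref{isom1}: for $\cE_1$ these are $a_{12}=a_1$, $a_{13}=-b_1$, $a_{23}=c_1$, and for $\cE_2$ they are $b_{12}=a_2$, $b_{13}=-b_2$, $b_{23}=c_2$ (the remaining entries being fixed by skew symmetry). Since $a_1b_1c_1\neq0$ and $a_2b_2c_2\neq0$, every off-diagonal entry is nonzero, so the hypothesis $a_{ij}b_{lm}\neq0$ of Theorem~\ref{isom1} is satisfied for all admissible index pairs in dimension $3$. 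I note in passing that a $3\times3$ skew symmetric matrix is automatically singular, so here the common rank is necessarily $2<3$, placing us in the second case of that theorem; but since I am invoking the theorem as a black box this observation is not needed for the argument.

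Next I would apply Theorem~\ref{isom1} directly: it asserts that $\cE_1\cong\cE_2$ if and only if the ratios $a_{ij}/b_{ij}$ are all equal as $(i,j)$ ranges over the off-diagonal positions $(1,2)$, $(1,3)$, $(2,3)$. Writing these three ratios out gives $\frac{a_{12}}{b_{12}}=\frac{a_1}{a_2}$, then $\frac{a_{13}}{b_{13}}=\frac{-b_1}{-b_2}=\frac{b_1}{b_2}$, and finally $\frac{a_{23}}{b_{23}}=\frac{c_1}{c_2}$. The one point requiring a moment of care is the $(1,3)$ entry, where both matrices carry a minus sign; these cancel, so no spurious sign survives, and the condition coming from the theorem collapses to the single chain of equalities $\frac{a_1}{a_2}=\frac{b_1}{b_2}=\frac{c_1}{c_2}$.

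Finally I would observe that, because all six scalars are nonzero, this chain is equivalent to its reciprocal $\frac{a_2}{a_1}=\frac{b_2}{b_1}=\frac{c_2}{c_1}$, which is precisely the stated criterion. Since the only nontrivial content is supplied by Theorem~\ref{isom1}, there is essentially no obstacle beyond the bookkeeping, and the sole thing one must not overlook is the sign convention in the $(1,3)$ slot. For completeness I would record that the isomorphism realizing the ``if'' direction is the diagonal change of basis $f_i=\lambda\,e_i$ with $\lambda=\frac{a_1}{a_2}=\frac{b_1}{b_2}=\frac{c_1}{c_2}$, exactly the scaling furnished by the proof of that theorem, so the claim follows.
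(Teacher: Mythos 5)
Your proof is correct and is exactly the paper's approach: the corollary appears there with no separate proof, being an immediate specialization of Theorem~\ref{isom1} to $n=3$, with the same entry identification $a_{12}=a_1$, $a_{13}=-b_1$, $a_{23}=c_1$ and the same sign cancellation in the $(1,3)$ slot. One cosmetic remark: since rescaling a natural basis by $\lambda$ multiplies the structure matrix by $\lambda$, the diagonal isomorphism should use the reciprocal scaling $\lambda=\frac{a_2}{a_1}=\frac{b_2}{b_1}=\frac{c_2}{c_1}$ rather than $\frac{a_1}{a_2}$; your choice merely copies a slip already present in the paper's own proof of the $(\Leftarrow)$ direction of Theorem~\ref{isom1}, and it does not affect the validity of either argument.
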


The following theorem yields a relation between isomorphism of Volterra evolution algebra and the ergodicity of Voltera QSO in two dimensional setting.

\begin{thm}
Let $\cE_1$ and $\cE_2$ be 3-dimensional two isomorphic Volterra evolution algebras given as in corollary \ref{isoth}. Then the corresponding Volterra QSOs are  either ergodic or non ergodic. 
\end{thm}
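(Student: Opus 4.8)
The plan is to show that, for the three–dimensional Volterra QSO in \eqref{2d}, whether the ergodic theorem holds is governed \emph{only} by the common sign pattern of the three structural parameters, and then to observe that the isomorphism criterion of Corollary~\ref{isoth} leaves this pattern invariant.

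First I would make the correspondence between the algebras and the operators explicit. Reading off the entries of the structural matrix of $\cE_1$ through the canonical form \eqref{volt}, the Volterra QSO attached to $\cE_1$ is precisely the operator \eqref{2d} with $(a,b,c)=(a_1,b_1,c_1)$; likewise the operator attached to $\cE_2$ is \eqref{2d} with $(a,b,c)=(a_2,b_2,c_2)$. Since $a_1b_1c_1\neq 0$ and $a_2b_2c_2\neq 0$ by hypothesis, Theorem~\ref{main} is applicable to each operator whenever its three parameters share a common sign.

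Next I would invoke the isomorphism hypothesis. By Corollary~\ref{isoth}, $\cE_1\cong\cE_2$ forces
\[
\frac{a_2}{a_1}=\frac{b_2}{b_1}=\frac{c_2}{c_1}=\lambda
\]
for a single nonzero scalar $\lambda$, so that $(a_2,b_2,c_2)=\lambda\,(a_1,b_1,c_1)$. The decisive observation is that multiplication by a fixed nonzero $\lambda$ either preserves all three signs (when $\lambda>0$) or reverses all three simultaneously (when $\lambda<0$); in either case it carries the set of triples with a common sign bijectively onto itself. Hence $a_1,b_1,c_1$ share a common sign if and only if $a_2,b_2,c_2$ do. Combining this with the previous paragraph, if the parameters of $\cE_1$ share a common sign then so do those of $\cE_2$, and Theorem~\ref{main} yields that \emph{both} operators fail to be ergodic.

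The step I expect to be the genuine obstacle is the ergodic (rather than non-ergodic) alternative. Theorem~\ref{main} only supplies the implication ``common sign $\Rightarrow$ non-ergodic,'' so to close the stated dichotomy I must also use the converse: a mixed sign pattern forces \eqref{2d} to be ergodic. Granting this sharp description of ergodicity for the two–dimensional Volterra QSO (which should be cited from \cite{Nasr}, cf.\ also \cite{GGJ}), the mixed-sign case gives that neither triple has a common sign, whence both operators are ergodic. In either case the two operators share the same ergodicity status, which is the assertion. Without the converse the argument establishes only that \emph{non-ergodicity} is an isomorphism invariant of these algebras, so locating or supplying that complementary ergodicity statement is where the real work lies.
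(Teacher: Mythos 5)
Your argument is essentially the paper's own proof: the paper likewise deduces from Corollary \ref{isoth} that the two parameter triples are proportional, observes that proportionality either preserves or simultaneously reverses all three signs, and then invokes Theorem \ref{main}. The obstacle you flag at the end is genuine and is in fact glossed over by the paper, whose proof simply asserts that Theorem \ref{main} ``yields the required assertion'' even though that theorem supplies only the implication (common sign $\Rightarrow$ non-ergodic); closing the ergodic half of the dichotomy requires the converse statement for the operator \eqref{2d} from the literature (\cite{Nasr}, cf.\ \cite{GGJ}), exactly as you observe.
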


\begin{proof}
Since $\cE_1 \cong \cE_2$ then by Theorem \ref{isoth}, we have $\frac{a_2}{a_1}=\frac{b_2}{b_1}=\frac{c_2}{c_1}.$   This implies that their structural matrices of either have the same signs or different signs. 
Hence, Theorem \ref{main} yields the required assertion. 
\end{proof}

From this theorem, we may formulate the following conjecture. 

\begin{con}
Let ${\cE}_1$ and ${\cE}_2$ be two $n-$dimensional isomorphic Voltera evolution algebras. Then the corresponding Volterra QSOs are either ergodic or non ergodic. 
\end{con}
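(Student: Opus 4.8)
The plan is to reduce the $n$-dimensional statement to a sign-pattern criterion for ergodicity, in direct analogy with the three-dimensional theorem proved just above. First I would apply Theorem \ref{isom1}: if $\cE_1$ and $\cE_2$ are isomorphic, with structure constants $(a_{ij})$ and $(b_{ij})$ respectively, then there is a single nonzero scalar $\lambda$ with $a_{ij}=\lambda b_{ij}$ for all $i\neq j$. Hence the skew-symmetric coefficient matrices $A_1=(a_{ij})$ and $A_2=(b_{ij})$ of the corresponding Volterra QSOs, written in the form \eqref{volt}, satisfy $A_1=\lambda A_2$. In particular $\mathrm{sign}(a_{ij})=\mathrm{sign}(\lambda)\,\mathrm{sign}(b_{ij})$ for every pair $(i,j)$, so the two matrices carry the same sign pattern when $\lambda>0$ and globally reversed sign patterns when $\lambda<0$.

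Next I would pass to the tournament attached to each operator, drawing an arc $i\to j$ exactly when the corresponding off-diagonal entry is positive; this is the weighted graph naturally associated with a Volterra QSO. The computation above shows that the tournament of $V_1$ either coincides with that of $V_2$ (when $\lambda>0$) or is its arc-reversal (when $\lambda<0$). The heart of the argument would then be a higher-dimensional analogue of Theorem \ref{main}, namely that the ergodicity of a Volterra QSO is governed by the sign pattern of its skew-symmetric matrix alone and is invariant under reversing all the signs. Granting such a criterion, the case $\lambda>0$ is immediate, while the case $\lambda<0$ follows from the sign-reversal invariance; I would try to establish the latter by exhibiting an orientation-reversing symmetry of $S^{m-1}$ conjugating the operator with matrix $A$ to the one with matrix $-A$, so that their Cesàro averages $\frac1N\sum_{t=0}^{N-1}V^t x$ converge or diverge simultaneously.

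The main obstacle is precisely the sign-pattern criterion invoked above. In the three-dimensional case it is supplied by Theorem \ref{main} together with the accompanying analysis of \cite{Zak,Nasr}, where the all-equal-sign condition completely characterizes non-ergodicity; this is what makes the preceding theorem unconditional. In higher dimensions no such classification is known: as remarked after Theorem \ref{main}, there is at present no general criterion for the non-ergodicity of Volterra QSO, and only isolated low-dimensional families have been treated in \cite{GGJ}. A less demanding route, avoiding a full classification, would be to show directly that positive rescaling $A\mapsto\lambda A$ with $\lambda>0$ preserves ergodicity; but such a rescaling yields a genuinely different dynamical system rather than a mere reparametrization, and controlling the Cesàro means under scaling seems to require quantitative information about the $\omega$-limit sets of $V$ that is not currently available. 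I therefore expect the statement to be provable in full generality only once a sign-based (equivalently, tournament-based) ergodicity dichotomy for Volterra QSO has been established, and to remain a conjecture until then.
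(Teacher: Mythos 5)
This statement is a \emph{conjecture} in the paper: the authors give no proof of it, only the three-dimensional theorem preceding it and a single supporting example (a four-dimensional operator proportional to a known non-ergodic one from \cite{GGJ}, reduced to the three-dimensional case by aggregating coordinates and then invoking Theorem \ref{main}). Your analysis is therefore correct and matches the paper's own position. The reduction you describe --- Theorem \ref{isom1} forces $a_{ij}=\lambda b_{ij}$ for a single scalar $\lambda$, so the skew-symmetric matrices of the two QSOs have identical sign patterns ($\lambda>0$) or globally reversed ones ($\lambda<0$) --- is exactly the mechanism the authors use to prove the $3$-dimensional theorem, where Theorem \ref{main} supplies the needed sign-based non-ergodicity criterion. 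And the obstruction you name is precisely the one the authors themselves acknowledge after Theorem \ref{main}: no general ergodicity/non-ergodicity criterion for Volterra QSOs exists in higher dimensions, which is why the statement is left as a conjecture rather than a theorem. Your correct identification of both the natural attack and the genuine gap is the right conclusion here; the only thing you could add, in the spirit of the paper's example, is that partial cases of the conjecture can be verified whenever the operator admits a coordinate-aggregation reducing it to dimension three, but this falls well short of a general proof.
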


In \cite{GGJ} it has been described dynamical behavior of all extremal Volterra QSO on low dimensional simplexes (up to dimension 5).   Now, the last conjecture allows to
predict dynamical behavior of other kinds of Volterra QSO which are isomorphic to extremal ones.  The next result supports this conjecture. 

\begin{ex}
Let us consider the following structural matrix of Volterra evolution algebra $\cE_1$:
$${\bf{A_\cE}}_1=\left(
\begin{array}{cccc}
0 & 1 & 1& -1\\
-1 & 0 & 1 & 1 \\
-1 & -1 & 0& 1\\
1 & -1 & -1& 0   
\end{array}\right) 
$$
in \cite{GGJ}, it has been showed that Volterra QSO associated with above given matrix is non-ergodic. Now let us consider another Volterra evolution algebra  $\cE_2$ with the following structure matrix   
$${\bf{A_\cE}}_2=\left(
\begin{array}{cccc}
0 & a & a& -a\\
-a & 0 & a & a \\
-a & -a & 0& a\\
a & -a & -a& 0   
\end{array}\right) 
$$
where $0\neq |a|<1$.  From Theorem \ref{isom1} we find that $\cE_1\cong \cE_2.$  Now, we are going to show that the corresponding Volterra QSO (to $\cE_2$) is not ergodic. Indeed, due to \eqref{volt} the Volterra
QSO has the following form:
 \begin{align}\label{ug}
V_{A_{\cE_2}} := \left\{ \begin{array}{cc} 
                 x_1'=x_1(1+ax_2+ax_3-ax_4) \\
                x_2'=x_2(1-ax_1+ax_3-ax_4) \\
                x_3'=x_3(1-ax_1-ax_2-ax_4) \\
                x_4'=x_4(1+ax_1-ax_2-ax_3) 
                \end{array} \right.
\end{align}
Introducing new variables $x=x_1,\ y=x_2+x_3,\ z=w$, one can see that the last QSO \eqref{ug} reduces to  
   \begin{align}\label{ug1}
\tilde V_{A_{\cE_2}} := \left\{ \begin{array}{cc} 
                 x'=x(1+ay-az) \\
                y'=y(1-ax+az) \\
                z'=z(1+ax-ay)                  
                \end{array} \right.
\end{align}
Then by Theorem \ref{main} we conclude that $\tilde V_{A_{\cE_2}}$ is not ergodic.  Hence, $V_{A_{\cE_2}}$ is not ergodic. 
\end{ex}    

Hence, the formulated conjecture allows us to predict dynamical behavior of Volterra QSO by examining algebraic structure of the corresponding Volterra evolution algebras. 

 \section{Graphs of Volterra Evolution Algebras}
 
 In this section, we are going to find a relation between the isomorphism of two Voltera evolution algebras and the isomorphism of the associated weighted graphs.  
 
 \begin{defn}
Let $\cE$ be an evolution algebra with a natural basis $B=\{e_1,\ldots,e_n\}$ and matrix of structural constants $A=\bigl(\alpha_{ij}\bigr)$.
\begin{itemize}
\item A graph $\Gamma(\cE,B)=(V,E)$, with $V=\{1,\ldots,n\}$ and $E=\{(i,j)\in V\times V: \alpha_{ij}\ne 0\}$, is called the \emph{graph attached to the evolution algebra $\cE$ relative to the natural basis $B$}.
\item The triple $\Gamma^w(\cE,B)=(V,E,\omega)$, with $\Gamma(\cE,B)=(V,E)$ and where $\omega$ is the map $E\rightarrow \FF$ given by $\omega\bigl((i,j)\bigr)=\alpha_{ij}$, is called the \emph{weighted graph attached to the Volterra 
evolution algebra $\cE$ relative to the natural basis $B$}.
\end{itemize}
 
\end{defn}
\begin{defn}
If every two vertices of a graph is connected by an edge, then such a graph is called \textit{complete}.  A complete graph with $n$ vertices is denoted by $K_n.$
\end{defn}

\begin{defn}
Let $\Gamma_1=(V_1,E_1,w_1)$ and $\Gamma_2=(V_2,E_2,w_2)$ be two weighted graphs. We call $\Gamma_1$ and $\Gamma_2$ are $w-$isomorphic $(\cong^{w})$ if the following conditions hold:
\begin{itemize}
\item[(i)] The corresponding unweighted graphs $(V_1,E_1)$ and $(V_2,E_2)$ are isomorphic, i.e. there is a bijection $\pi:V_1\to V_2$ with $w_1(i,j)\neq 0$, $w_2(\pi(i),\pi(j))\neq 0$ for all $i,j\in V_1$;
\item[(ii)] For non zero weights one has  $\frac{w_1(i,j)}{w_2(\pi(i),\pi(j))}=\frac{w_1(l,m)}{\w_2(\pi(l),\pi(m))}$, $i,j,l,m\in V_1$.
\end{itemize} 
\end{defn}

\begin{rk}
We stress that  the $w-$ isomorphism is an equivalence relation. Indeed, it is enough tp prove the transitive property i.e, if $\Gamma_1 \cong^{w} \Gamma_2  $ and $\Gamma_2 \cong^{w} \Gamma_3 $ then we have to show that $\Gamma_1 \cong^{w} \Gamma_3.$ Due to  $\Gamma_1 \cong^{w} \Gamma_2  $ then there is $\pi_1:V_1\to V_2$ such that $\frac{w_1(i,j)}{w_1(l,m)}=\frac{w_2(\pi_1(i),\pi_1(j))}{\w_2(\pi_1(l),\pi_1(m))} $, and  from 
$\Gamma_2 \cong^{w} \Gamma_3 $ there is $\pi_2:V_2\to V_3$ such that  $\frac{w_2(\pi_1(i),\pi_1(j))}{\w_2(\pi_1(l),\pi_1(m))}=\frac{w_3(\pi_3(i),\pi_3(j))}{\w_3(\pi_3(l),\pi_3(m))} $, where $\pi_3=\pi_2\circ \pi_1$. This implies 
$\frac{w_1(i,j)}{w_1(l,m)}= \frac{w_3(\pi_3(i),\pi_3(j))}{\w_3(\pi_3(l),\pi_3(m))} $ Hence, $\Gamma_1 \cong^{w} \Gamma_3.$ 
\end{rk}

\begin{thm}\label{nd}
Let $\cE_1$ and $\cE_2$ be two  $n-$ dimensional Volterra evolution algebras given as in theorem \ref{isom1} with associated weighted graphs  $\Gamma_1 , \Gamma_2$ respectively. Then  the following statements are equivalent:  
\begin{itemize}
\item[(i)] $\Gamma_1 \cong^w \Gamma_2$;
\item[(ii)] $\cE_1 \cong \cE_2$.
\end{itemize}
\end{thm}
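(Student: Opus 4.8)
The plan is to exploit the fact that, since $\cE_1$ and $\cE_2$ are given as in Theorem~\ref{isom1}, \emph{all} off-diagonal structural constants are nonzero; consequently both underlying unweighted graphs coincide with the complete graph $K_n$, their edge sets are full, and the weights are simply $w_1(i,j)=a_{ij}$ and $w_2(i,j)=b_{ij}$. With this observation the two conditions in the definition of a $w$-isomorphism collapse: condition~(i) is automatic for \emph{any} bijection $\pi\colon V_1\to V_2$, both graphs being $K_n$, so that $\Gamma_1\cong^{w}\Gamma_2$ means precisely that there exists a bijection $\pi$ for which the ratio $a_{ij}/b_{\pi(i),\pi(j)}$ is independent of the pair $(i,j)$. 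I would then prove the two implications by matching this constancy against the criterion $a_{ij}/b_{ij}=a_{lm}/b_{lm}$ supplied by Theorem~\ref{isom1}.

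For the implication $(ii)\Rightarrow(i)$ I would start from $\cE_1\cong\cE_2$ and apply Theorem~\ref{isom1} to obtain that $a_{ij}/b_{ij}$ is constant over all $1\le i\neq j\le n$. Choosing $\pi=\id$, condition~(i) holds because both graphs are $K_n$, and condition~(ii) of the $w$-isomorphism becomes exactly the statement that $w_1(i,j)/w_2(i,j)=a_{ij}/b_{ij}$ does not depend on $(i,j)$. Hence $\Gamma_1\cong^{w}\Gamma_2$, and this direction is routine.

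The substance lies in $(i)\Rightarrow(ii)$, where the bijection $\pi$ furnished by the $w$-isomorphism need not be the identity, whereas Theorem~\ref{isom1} is phrased with respect to a fixed indexing. The key step I would carry out is to absorb $\pi$ into a relabeling of the natural basis of $\cE_2$: writing $\{f_1,\dots,f_n\}$ for the natural basis of $\cE_2$ and setting $g_i:=f_{\pi(i)}$, one checks that $g_ig_j=f_{\pi(i)}f_{\pi(j)}=0$ for $i\neq j$ and $g_i^2=\sum_{l} b_{\pi(i),\pi(l)}\,g_l$, so that $\{g_i\}$ is again a natural basis of the \emph{same} algebra $\cE_2$, with structural matrix $\tilde b_{ij}=b_{\pi(i),\pi(j)}$. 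This matrix is still skew-symmetric, and since $\pi$ is a bijection its off-diagonal entries remain nonzero, so $\cE_2$ in the $g$-basis again satisfies the hypotheses of Theorem~\ref{isom1}. Condition~(ii) of the $w$-isomorphism now reads precisely $a_{ij}/\tilde b_{ij}=a_{lm}/\tilde b_{lm}$, whence Theorem~\ref{isom1} applied to $\cE_1$ and $\cE_2$ in the $g$-basis yields $\cE_1\cong\cE_2$.

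I expect the only genuine obstacle to be this permutation bookkeeping in $(i)\Rightarrow(ii)$: one must ensure that permuting a natural basis preserves both the skew-symmetry of the structural matrix and the ``all entries nonzero'' hypothesis needed to invoke Theorem~\ref{isom1}, and that the relabeled algebra is literally $\cE_2$ rather than merely an isomorphic copy. Both points follow immediately from the evolution-algebra multiplication rules, so once the relabeling is set up the proof reduces to a direct translation between the two ratio conditions.
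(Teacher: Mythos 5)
Your proposal is correct and takes essentially the same approach as the paper: the paper likewise disposes of $(ii)\Rightarrow(i)$ by noting both graphs are complete and matching the weight-ratio condition against Theorem~\ref{isom1}, and for $(i)\Rightarrow(ii)$ it also absorbs the bijection $\pi$ into a relabeling of a natural basis (combined there with an explicit scaling $f'_i=\frac{b_{\pi(i)}}{a_{\pi(i)}}e_{\pi(i)}$, which is exactly what the backward direction of Theorem~\ref{isom1} that you invoke performs). Your version is, if anything, slightly cleaner in making explicit that the permuted basis of $\cE_2$ is again a natural basis with skew-symmetric, nowhere-zero structural matrix, a point the paper leaves implicit.
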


\begin{proof}
(i)$\Rightarrow $ (ii)
Let $\pi:\Gamma_1\to \Gamma_2$ be an isomorphism.  Let us do the following change of basis:
 $f_i'=\frac{b_{\pi(i)}}{a_{\pi(i)}}e_{\pi(i)}.$ Then one can see the obtained  evolution algebra  $\cE'$ 
 we have
 $$ {f'_i}^2=\Bigg(\frac{b_{\pi(i)}}{a_{\pi(i)}}\Bigg)^2e^2_{\pi(i)}
$$
which is clearly Volterra evolution algebra.
Now, we are going to show that $\cE'\cong \cE_2,$ to end this job take the following change of basis $$f_i=f'_{\pi^{-1}(i)}$$ So by these change of basis we have $\cE'\cong \cE_2.$ Therefore,  $\cE_1\cong \cE_2 $\\

(i)$\Leftarrow $ (ii) Since $a_{ij}\neq 0$ and $b_{ij}\neq 0$ for any $1\leq i\neq j \leq n$ then the associated weighted graphs are completed graphs with the same number of vertices,  this means that the corresponding 
unweighed graphs are isomorphic. From 
$\frac{w_1(i,j)}{w_2(\pi(i),\pi(j))}=\frac{w_1(l,m)}{\w_2(\pi(l),\pi(m))}$, we obtain  $\frac{a_{ij}}{b_{ij}} =\frac{a_{lm}}{b_{lm}},$ for any $1\leq i\neq j \leq n,\ 1\leq l\neq m \leq n$, which, by Theorem \ref{isom1}, yields the assertion.  This completes the proof.
\end{proof}

\begin{cor}
Let $\cE_1$ and $\cE_2$ be  three dimensional Volterra evolution algebras, if their graphs are isomorphic as a graphs and one has  
\[
\begin{tikzpicture}%
[->,>=stealth',shorten >=1pt,auto,thick,every node/.style={circle,fill=blue!20,draw}]
  \node (n1) at (4,8)  {1};
  \node (n2) at (7,8) {2};
  \node (n3) at (5,5)  {3};

   \path[every node/.style={font=\sffamily\small}]
           (n2)  edge [bend left] node[below] {$a_3$} (n3)
        (n1)  edge  [bend left] node[above] {$a_1$} (n2)
       (n3)  edge  [bend left] node[above] {$a_2$} (n1); 
       
\end{tikzpicture}
,\ \ \ \  \begin{tikzpicture}%
[->,>=stealth',shorten >=1pt,auto,thick,every node/.style={circle,fill=blue!20,draw}]
  \node (n1) at (4,8)  {1};
  \node (n2) at (7,8) {2};
  \node (n3) at (5,5)  {3};

   \path[every node/.style={font=\sffamily\small}]
           (n2)  edge [bend left] node[below] {$b_3$} (n3)
        (n1)  edge  [bend left] node[above] {$b_1$} (n2)
       (n3)  edge  [bend left] node[above] {$b_2$} (n1); 
       
\end{tikzpicture}
\]
 then the corresponding Volterra evolution algebras 
are isomorphic. Here $a_1,a_2,a_3$ have the same sign and $b_1,b_2,b_3$ have the same sign.
\end{cor}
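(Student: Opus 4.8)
The plan is to read the two weighted triangles as the weighted graphs $\Gamma_1=\Gamma^w(\cE_1,B_1)$ and $\Gamma_2=\Gamma^w(\cE_2,B_2)$, and then to invoke Theorem \ref{nd} in dimension $n=3$ (equivalently, Corollary \ref{isoth}). First I would record the structural matrices encoded by the pictures: reading each directed, weighted edge into a skew-symmetric array gives
\[
A_{\cE_1}=\left(\begin{array}{ccc}0 & a_1 & -a_2\\ -a_1 & 0 & a_3\\ a_2 & -a_3 & 0\end{array}\right),\qquad A_{\cE_2}=\left(\begin{array}{ccc}0 & b_1 & -b_2\\ -b_1 & 0 & b_3\\ b_2 & -b_3 & 0\end{array}\right),
\]
which are precisely of the shape appearing in Corollary \ref{isoth}, with $(a_1,a_2,a_3)$ and $(b_1,b_2,b_3)$ playing the roles of $(a_i,b_i,c_i)$ there. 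Since by hypothesis every weight is nonzero, both underlying unweighted graphs coincide with the complete graph $K_3$; in particular they are isomorphic, so condition~(i) in the definition of $w$-isomorphism is met, with $\pi$ the bijection realizing the graph isomorphism of the two pictures.

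The core step is then to supply condition~(ii), namely that the ratios of corresponding nonzero weights agree. If the phrase ``isomorphic as graphs'' is read in the weighted sense $\cong^{w}$, this is given outright, and Theorem \ref{nd} (or, directly, Corollary \ref{isoth} via the change of basis $f_i=\tfrac{b_{\pi(i)}}{a_{\pi(i)}}e_{\pi(i)}$) yields $\cE_1\cong\cE_2$ at once. Otherwise one must extract it, and here the common-sign hypotheses enter: because $a_1,a_2,a_3$ share one sign and $b_1,b_2,b_3$ share one sign, the two oriented triangles are compatible under a single vertex relabeling $\pi$, so comparing weights edge by edge reduces to an equality of ratios of like sign rather than a sign-obstructed comparison. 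Establishing $\tfrac{a_1}{b_1}=\tfrac{a_2}{b_2}=\tfrac{a_3}{b_3}$ (after applying $\pi$) is the heart of the matter; once it holds, $\Gamma_1\cong^{w}\Gamma_2$ and the conclusion follows.

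The main obstacle I anticipate is exactly the orientation/sign bookkeeping in passing from the directed weighted triangle to its skew-symmetric structural matrix: one must verify that the edge orientations shown in the two pictures are realized by one and the same vertex permutation $\pi$, so that condition~(ii) becomes an equality of positive ratios. This is where the requirement that $a_1,a_2,a_3$ (respectively $b_1,b_2,b_3$) have a common sign is genuinely used rather than merely decorative. Thematically, I would close by remarking that this same common-sign assumption places both associated Volterra QSOs under the hypotheses of Theorem \ref{main}, so that the isomorphism $\cE_1\cong\cE_2$ obtained here is consistent with the non-ergodic dichotomy anticipated by the conjecture above.
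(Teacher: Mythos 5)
Your first branch is exactly the paper's intended derivation: the corollary appears immediately after Theorem \ref{nd} with no separate proof, and under the weighted reading of ``isomorphic as graphs'' (i.e.\ $\Gamma_1\cong^{w}\Gamma_2$) your reduction of the two pictures to structural matrices of the shape in Corollary \ref{isoth}, followed by the change of basis $f_i=\frac{b_{\pi(i)}}{a_{\pi(i)}}e_{\pi(i)}$ from the proof of Theorem \ref{nd}, is complete and is the same argument the paper relies on.

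The gap is in your second branch, and it is not closable. You claim that when only an unweighted graph isomorphism is assumed, the common-sign hypotheses let one ``reduce to an equality of ratios of like sign,'' i.e.\ extract $\frac{a_1}{b_1}=\frac{a_2}{b_2}=\frac{a_3}{b_3}$ (after a relabeling $\pi$); you then defer this as ``the heart of the matter'' without proving it. Signs cannot produce this metric constraint. Take $(a_1,a_2,a_3)=(1,1,1)$ and $(b_1,b_2,b_3)=(1,2,3)$: all six weights are positive, both underlying graphs are $K_3$ and hence isomorphic as unweighted graphs, yet under every vertex relabeling of the triangle the ratios $\frac{b_i}{a_i}$ are $1,2,3$ in some order, so they are never all equal. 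By the paper's own Theorem \ref{isom1} and Corollary \ref{isoth}, the corresponding algebras are then \emph{not} isomorphic. So the unweighted reading of the corollary is simply false, the sign hypotheses notwithstanding, and no amount of orientation bookkeeping will rescue it. The only defensible proof is your first branch: commit to the $\cong^{w}$ reading, where condition (ii) of the definition of $w$-isomorphism hands you the ratio equality outright; the sign hypotheses carry no weight in the isomorphism claim itself and serve only the ergodicity discussion surrounding Theorem \ref{main}.
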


\section{Derivation of Volterra evolution algebras and the associated graphs}

It is well known \cite{CGOT} that the derivation of any evolution algebra with non-singular matrix is zero.  As any skew symmetric matrix always has an even rank, therefore, if 
$n$ is odd then the maximal rank of our structural matrix could be  $n-1.$ In this section, we are going to describe derivation of $n$-dimensional Volterra evolution algebras whose associated graphs is complete. 

 In what follows, we need the following auxiliary fact.
 
\begin{lemma}\label{inv}
Let $D:\cE\to \cE$ be a derivation and suppose that the $rank(A_{\cE})=m$ where $m$ is even, then the following statements hold true
\begin{itemize}
\item[(i)] The subspace $\cE_1=\cE\ominus \cup_{j=m+1}^{n}\langle e_j \rangle$ is invariant under $D$ if $d_{ij}=0$ for all $1\leq i\leq m.$
\item[(ii)] The subspace $\cE_2=\cE\ominus \cup_{i=1}^{m}\langle e_i \rangle$ is invariant under $D$ if $d_{ji}=0$ for all $m+1\leq j\leq n.$
\end{itemize}
\end{lemma}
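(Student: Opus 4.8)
The plan is to convert the derivation property into coordinate relations among the entries of $D$, and then read off invariance of the two complementary coordinate subspaces directly from those relations. Writing $D(e_i)=\sum_{j=1}^n d_{ij}e_j$, I first record the constraints forced by the Leibniz rule. Applying $D$ to $e_ie_j=0$ for $i\neq j$ and using that $e_ke_l=0$ unless $k=l$ gives the basic relation
\[
d_{ij}\,e_j^2+d_{ji}\,e_i^2=0\qquad(i\neq j),
\]
while applying $D$ to $e_i^2$ and comparing with $D(e_i^2)=2e_iD(e_i)=2d_{ii}\,e_i^2$ yields $\sum_{j}a_{ij}D(e_j)=2d_{ii}\,e_i^2$. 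These two families are the only structural input I will use.

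I would next fix the bookkeeping coming from $\operatorname{rank}(A_\cE)=m$. Since a skew-symmetric matrix has even rank, $m$ is even, and by the Remark preceding the lemma I may assume the leading principal block $A_{m\times m}$ is nonsingular. As the rows of $A_{m\times m}$ are exactly the coordinate vectors of $e_1^2,\dots,e_m^2$ in the first $m$ coordinates, this forces $e_1^2,\dots,e_m^2$ to be linearly independent and to form a basis of $\cE^2$, and every remaining square expands as $e_j^2=\sum_{l=1}^m A_l^{(j)}e_l^2$ for $j>m$, with the coefficients $A_l^{(j)}$ introduced earlier. Thus any relation among the squares can be compared coefficientwise against the genuine basis $e_1^2,\dots,e_m^2$.

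With these tools in hand the two assertions are symmetric, and each reduces to a generator check. For (i), invariance of $\cE_1=\operatorname{span}\{e_1,\dots,e_m\}$ means precisely that $D(e_i)\in\cE_1$ for every $i\le m$, i.e.\ that $d_{ij}=0$ whenever $i\le m<j$; I would obtain this by feeding the hypothesis into the basic relation for pairs $i\le m<j$, substituting $e_j^2=\sum_{l\le m}A_l^{(j)}e_l^2$ and equating coefficients of the independent squares to propagate the prescribed vanishing onto the cross-block entries. Assertion (ii), invariance of $\cE_2=\operatorname{span}\{e_{m+1},\dots,e_n\}$, is handled in the same way with the index ranges interchanged, testing $D(e_j)\in\cE_2$ for $j>m$.

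The step I expect to be the main obstacle is disentangling the degenerate directions $e_{m+1},\dots,e_n$: their squares are not independent but are tied to the pivot squares through the coefficients $A_l^{(j)}$, so a single entry cannot be isolated from the relation $d_{ij}e_j^2+d_{ji}e_i^2=0$ alone, and one must instead solve the coupled linear system obtained after expanding every $e_j^2$ in the basis $e_1^2,\dots,e_m^2$. The nondegeneracy of $A_{m\times m}$ guaranteed by the even rank is exactly what makes this coefficient comparison unambiguous, and I would be careful to verify that no hidden relations among the $A_l^{(j)}$ spoil the argument.
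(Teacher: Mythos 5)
Your proposal misreads the logical structure of the lemma, and the machinery you build around it is at best circular. The statement to be proved is an implication whose hypothesis, $d_{ij}=0$ for $1\le i\le m$ and $m+1\le j\le n$, is \emph{precisely} the assertion that each generator $e_i$ of $\cE_1$ satisfies $D(e_i)\in\cE_1$: those $d_{ij}$ are the coordinates of $D(e_i)$ along the deleted directions $e_{m+1},\dots,e_n$. Hence invariance follows from linearity alone, in one line: for $x=\sum_{i=1}^{m}\alpha_ie_i$ one has $D(x)=\sum_{k=1}^{n}\bigl(\sum_{i=1}^{m}\alpha_id_{ik}\bigr)e_k$, and the hypothesis kills every coefficient with $k>m$. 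This is exactly the paper's proof, and it uses neither the Leibniz rule, nor skew-symmetry, nor the rank hypothesis: the lemma is a statement about an arbitrary linear map whose matrix has a vanishing off-diagonal block.

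By contrast, you propose to \emph{obtain} the vanishing $d_{ij}=0$ ($i\le m<j$) by combining ``the hypothesis'' with the relations $d_{ij}e_j^2+d_{ji}e_i^2=0$ and the expansions $e_j^2=\sum_{l\le m}A_l^{(j)}e_l^2$. If ``the hypothesis'' means the lemma's actual hypothesis, this is deriving a statement from itself, and the entire apparatus (independence of $e_1^2,\dots,e_m^2$, the coefficients $A_l^{(j)}$, the ``coupled linear system'' you flag as the main obstacle) is superfluous. If instead you intend to extract the cross-block vanishing from the Leibniz relations themselves, the step fails: from $d_{ij}e_j^2+d_{ji}e_i^2=0$ you cannot isolate $d_{ij}$ when $e_j^2=0$, which can genuinely occur for $j>m$ since the rank is only $m$ (the third row of the table in Theorem \ref{tab} exhibits exactly such a degenerate direction carrying a nontrivial derivation). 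Deducing $d_{ij}=d_{ji}=0$ from the derivation property is the content of Proposition \ref{odd} and Theorem \ref{comgraph}, where it requires the additional hypothesis that the associated graph is complete (so that the coefficients $\alpha_i$ in $e_j^2=\sum_i\alpha_ie_i^2$ are all nonzero); it is not, and cannot be, part of this lemma, which serves only as the trivial linear-algebra step those later results invoke after they have verified its hypothesis.
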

\begin{proof}
 Let $x\in \cE_1$ then $x=\sum_{i=1}^{m}\alpha_ie_i$. Then 
$$D(x)=\sum_{i=1}^{m}\alpha_i D(e_i)=\sum_{i=1}^{m}\alpha_i\sum_{k=1}^{n}d_{ik}e_k=\sum_{k=1}^{n}\left(\sum_{i=1}^{m}\alpha_i d_{ik}\right)e_k. $$ 
Due to $d_{ij}=0$ for all $1\leq i\leq m$, we have  
$$D(x)=\sum_{k=1}^{m}\left(\sum_{i=1}^{m}\alpha_i d_{ik}\right)e_k,$$which implies  $D(x)\in \cE_1.$ Hence $\cE_1$ is invariant under $D$.
 The proof of (ii) can be proceeded by the same argument as in (i). This completes the proof.
\end{proof}

\begin{pro}\label{odd}
 Let $\cE$ be a Volterra evolution algebra with  structural matrix $A_{\cE}$ such that $rank( A_{\cE})=n-1$. If the associated weighted graph $\Gamma$ is complete, then any derivation of $\cE$ is trivial. 
\end{pro}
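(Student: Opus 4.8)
The plan is to read off all constraints on a derivation $D$ from the Leibniz rule applied to the basis products $e_ie_j$, and then to exploit two features of a complete Volterra graph at once: every off-diagonal $a_{ij}$ is nonzero, while every diagonal entry $a_{ii}$ is zero. Write $D(e_i)=\sum_{k=1}^n d_{ik}e_k$. The key simplification is that the evolution-algebra multiplication collapses each mixed product to a single term: since $e_ke_j=0$ for $k\neq j$ and $e_je_j=e_j^2$, we get $D(e_i)e_j=d_{ij}e_j^2$ and $e_iD(e_j)=d_{ji}e_i^2$.

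First I would dispose of the off-diagonal entries of $D$. For $i\neq j$ we have $e_ie_j=0$, so the Leibniz rule gives $d_{ij}e_j^2+d_{ji}e_i^2=0$. Expanding $e_i^2=\sum_l a_{il}e_l$ and $e_j^2=\sum_l a_{jl}e_l$ and reading off the coefficient of the basis vector $e_i$, the contribution of $e_i^2$ disappears because $a_{ii}=0$, leaving $d_{ij}a_{ji}=0$. Completeness forces $a_{ji}\neq 0$, hence $d_{ij}=0$ for all $i\neq j$; that is, $D$ is diagonal. Note also that completeness gives $e_i^2\neq 0$ for every $i$, so $\cE$ is automatically non-degenerate.

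Next I would determine the diagonal entries $d_{ii}$. Applying $D$ to $e_i^2$ and using commutativity gives $D(e_i^2)=2e_iD(e_i)=2d_{ii}e_i^2$, while applying $D$ term by term to $e_i^2=\sum_k a_{ik}e_k$ and using that $D$ is diagonal gives $D(e_i^2)=\sum_k a_{ik}d_{kk}e_k$. Comparing the coefficient of $e_k$ for $k\neq i$, where $a_{ik}\neq 0$ by completeness, yields $d_{kk}=2d_{ii}$ for every $i\neq k$. Swapping the roles of $i$ and $k$ gives $d_{ii}=2d_{kk}$, so $d_{kk}=4d_{kk}$, i.e. $3d_{kk}=0$; as $\mathbb K$ has characteristic zero this forces $d_{kk}=0$ for all $k$. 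Combining the two steps, $D=0$, which is the assertion.

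Finally, a word on where the hypotheses enter and what is delicate. There is no deep obstacle: once the products are expanded, the argument is coefficient bookkeeping. The one point that requires care is choosing the right coefficient to compare — the coefficient of $e_i$ in the off-diagonal relation, exactly the place where $a_{ii}=0$ removes a term. What actually drives the proof is the completeness of $\Gamma$, i.e. that all $a_{ij}\neq 0$; the rank assumption $\mathrm{rank}(A_\cE)=n-1$ enters only to guarantee that $n$ is odd, so that $A_\cE$ can be simultaneously complete and singular (a complete skew-symmetric matrix on an odd number of vertices cannot have full rank). One could instead package both steps into the single matrix identity $A_\cE D=2\,\mathrm{diag}(d_{11},\dots,d_{nn})\,A_\cE$ forced by the Leibniz rule and analyze it directly, but the coefficientwise route above is the most transparent.
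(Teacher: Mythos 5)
Your proof is correct, but it takes a genuinely different and more economical route than the paper's. The paper uses the hypothesis $\mathrm{rank}(A_\cE)=n-1$ essentially: it writes $e_n^2=\sum_{i=1}^{n-1}\alpha_i e_i^2$ with all $\alpha_i\neq 0$ (by completeness), deduces $d_{in}=d_{ni}=0$ from $D(e_ie_n)=0$, then invokes Lemma \ref{inv} to get that the subalgebra spanned by $e_1,\dots,e_{n-1}$ is $D$-invariant, applies the cited result of \cite{CGOT} that a derivation of an evolution algebra with nonsingular structure matrix vanishes, and finally kills $d_{nn}$ by a spectral argument ($2d_{nn}$ must be an eigenvalue of $D$, whose spectrum is $\{d_{nn}\}$). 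You avoid all of this machinery: your two coefficient comparisons --- the coefficient of $e_i$ in $d_{ij}e_j^2+d_{ji}e_i^2=0$, where $a_{ii}=0$ removes the term that would otherwise obstruct the conclusion, and the coefficient of $e_k$ ($k\neq i$) in $D(e_i^2)=2d_{ii}e_i^2$, giving $d_{kk}=2d_{ii}$ for all $i\neq k$ and hence $3d_{kk}=0$ by symmetry --- use only completeness and characteristic zero. As you note, the rank hypothesis plays no role in your argument, so you have in fact proved the stronger Theorem \ref{comgraph} (trivial derivations for any Volterra evolution algebra with complete graph, regardless of rank) in one stroke, which would render the paper's subsequent case analysis and iterated reduction in that theorem unnecessary. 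What the paper's route buys is the invariant-subalgebra reduction technique, which it reuses in proving Theorem \ref{comgraph}; what yours buys is self-containedness (no appeal to \cite{CGOT} or Lemma \ref{inv}) and strictly greater generality. The only implicit assumption in your argument is $n\geq 2$, needed to choose $k\neq i$, which is harmless since the statement is degenerate for $n=1$.
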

\begin{proof}
First note that after suitable change of basis, we may assume that the first $n-1$ rows are linearly independent. So, $e_n^2=\sum_{i=1}^{n-1}\alpha_ie_i^2$  
 and due to the completeness of $\Gamma$, one has   $\prod_{i=1}^{n-1}\alpha_i\neq 0.$ Assume that $D$ is a derivation of $\cE$. By apply the derivation rule to $e_ie_n$, where $i\neq n,$  we have
 $$D(e_ie_n)=d_{in}e_n^2+d_{ni}e_i^2=0.$$
 Now plugging the value of $e_n^2$ into the last expression, one finds $$d_{in}\left( \sum_{i=1}^{n-1}\alpha_ie_i^2\right) +d_{ni}e_i^2=0$$
which gives the following system 
 $$\alpha_id_{in}+d_{ni}=0, \ \alpha_ld_{in}=0, \ l\neq i $$
According to $\prod_{i=1}^{n-1}\alpha_i\neq 0$,  we obtain $d_{in}=d_{ni}=0.$ Now, from (i) of Lemma \ref{inv}, it follows that $\cE_1=\cE\ominus <e_n>$ is invariant under $D$. Since 
$rank(A_{\cE_1})=n-1$  then $det(A_{\cE_1})\neq 0$ this implies that  $D_{\cE_1}$ is trivial. So, $spec(d)=\{d_{nn}\}.$ Due to  $e_n^2\neq 0$, we conclude that  $2d_{nn}$ is an eigenvalue of
$D$, hence, $spac(d)=\{d_{nn}\}=\{2d_{nn}\},$ which is possible if $D = 0.$ This proves the proposition.
\end{proof}

Now we are ready to prove a main result of this section. 

\begin{thm}\label{comgraph}
 Let $\cE$ be a Volterra evolution algebra with  structural matrix $A_{\cE}$. If the associated weighted graph $\Gamma$ is complete, then any derivation of $\cE$ is trivial. 
 \end{thm}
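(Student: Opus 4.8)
The plan is to read off every entry of a derivation $D$ directly from the two structural relations available in a natural basis, exploiting that completeness of $\Gamma$ forces all off-diagonal entries of $A_{\cE}$ to be nonzero. Writing $D(e_i)=\sum_{k=1}^{n}d_{ik}e_k$, proving triviality amounts to showing $d_{ik}=0$ for all $i,k$. First I would extract constraints by applying the derivation rule to the vanishing products $e_ie_j$ ($i\neq j$) and to the squares $e_i^2$.

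For $i\neq j$, since $e_ie_j=0$ and $e_ke_j=0$ unless $k=j$, one gets
\begin{equation*}
0=D(e_ie_j)=D(e_i)e_j+e_iD(e_j)=d_{ij}e_j^2+d_{ji}e_i^2 .
\end{equation*}
The crucial structural input is that \emph{for a complete graph the squares are pairwise linearly independent}: whenever $i\neq j$, $e_i^2$ and $e_j^2$ are independent. Indeed, a relation $\mu e_i^2+\nu e_j^2=0$ is exactly a relation $\mu(\text{row }i)+\nu(\text{row }j)=0$ between the corresponding rows of $A_{\cE}$; reading off the components in positions $i$ and $j$ and using $a_{ii}=a_{jj}=0$ yields $\nu a_{ji}=0$ and $\mu a_{ij}=0$, whence $\mu=\nu=0$ because completeness gives $a_{ij},a_{ji}\neq 0$. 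Consequently the displayed identity forces $d_{ij}=d_{ji}=0$ for \emph{every} pair $i\neq j$, so $D$ is diagonal, $D(e_i)=d_{ii}e_i$.

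It then remains to kill the diagonal. Since $D$ is diagonal, $D(e_i^2)=2D(e_i)e_i=2d_{ii}e_i^2$; on the other hand, applying $D$ to $e_i^2=\sum_k a_{ik}e_k$ gives $D(e_i^2)=\sum_k a_{ik}d_{kk}e_k$. Comparing the coefficient of $e_k$ for $k\neq i$, where $a_{ik}\neq 0$ by completeness, yields $d_{kk}=2d_{ii}$ for all $i\neq k$. Interchanging $i$ and $k$ gives $d_{ii}=2d_{kk}$, hence $d_{ii}=4d_{ii}$, i.e. $3d_{ii}=0$; as $\mathbb{K}$ has characteristic zero this forces $d_{ii}=0$ (nontriviality of $\cE$ guarantees $n\geq 2$, so such a pair $i\neq k$ exists). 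Therefore $D=0$.

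The point that deserves the most care — and which I would flag as the main obstacle — is the dependence on the rank of $A_{\cE}$: a complete graph imposes no upper bound on that rank beyond its being even, so a more pedestrian route would split into the even ranks $m\le n$ and reduce the top cases to Proposition \ref{odd} and to the nonsingular result of \cite{CGOT}, via the invariant decomposition $\cE=\cE_1\oplus\cE_2$ supplied by Lemma \ref{inv}. The observation that completeness already renders the squares $e_i^2$ pairwise independent is precisely what lets one bypass this case analysis and treat all ranks uniformly, which is why I would organize the proof around that independence rather than around the rank.
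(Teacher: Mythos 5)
Your proof is correct, and it takes a genuinely different and more streamlined route than the paper's. The paper's proof splits into cases according to the rank $m$ of $A_{\cE}$: when $m=n$ it invokes the nonsingularity result of \cite{CGOT}; when $m<n$ it expresses $e_j^2$ ($j>m$) through $e_1^2,\dots,e_m^2$, kills the mixed entries $d_{ij}$ with $1\le i\le m<j\le n$, and then relies on Lemma \ref{inv}, Proposition \ref{odd}, and a recursive passage to smaller invariant subalgebras to finish. Your argument bypasses all of this with one observation the paper never isolates: completeness forces the squares $e_i^2$, $e_j^2$ to be \emph{pairwise} linearly independent (reading off coordinates $i$ and $j$ of $\mu\,e_i^2+\nu\,e_j^2=0$ gives $\nu a_{ji}=0$ and $\mu a_{ij}=0$), even though the whole family $\{e_i^2\}$ may be dependent when $m<n$. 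This immediately yields $d_{ij}=d_{ji}=0$ for \emph{all} $i\ne j$ from $D(e_ie_j)=0$, so $D$ is diagonal, and the diagonal is then eliminated by the comparison $d_{kk}=2d_{ii}$, $d_{ii}=2d_{kk}$, hence $3d_{ii}=0$ in characteristic zero (your diagonal step is essentially the eigenvalue-doubling trick the paper uses inside Proposition \ref{odd} and in Case 1 of Theorem \ref{tab}, but applied uniformly). What your approach buys is considerable: it is self-contained (no appeal to \cite{CGOT}), it needs neither Lemma \ref{inv} nor Proposition \ref{odd} nor any induction on rank, and it sidesteps the paper's weakest step, namely the justification that the coefficients $\alpha_i$ in $e_j^2=\sum_{i=1}^m\alpha_ie_i^2$ are all nonzero (which the paper attributes, somewhat obscurely, to Theorem \ref{nd}). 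The only hypotheses you use are exactly those of the theorem: completeness, skew-symmetry, $n\ge 2$ (guaranteed by nontriviality), and characteristic zero.
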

\begin{proof}
 Since $\Gamma$ is completed graph then $deg(v_i)=n-1.$ Hence the entries of structural matrix of constant $A_{\cE}$ are non-zero for any $  i\neq j$.
Suppose that  $rank(A_{\cE})=n,$ it follows that $det(A_{\cE})\neq 0$ consequently,  the derivation of $\cE$ is trivial \cite{CGOT}.

 If $rank(A_{\cE})=m<n,$ as  $A_{\cE}$ is a skew symmetric matrix then $m$ must be even. Therefore, after suitable permutation of the basis, we may assume that the first $m$ rows are linearly independent. So, if $m<j\leq n$ one can write 
 \begin{eqnarray}\label{li}
e_j^2=\alpha_1e_1^2+\alpha_2e_2^2+\ldots+\alpha_me_m^2/
\end{eqnarray} 
Take into account Theorem \ref{nd} one gets $\prod_{i=1}^{m}\alpha_i\neq 0.$ 
 If $D$ is a derivation of $\cE$, then we have 
$$D(e_ie_j)=d_{ij}e_j^2+d_{ji}e_i^2=0, \ \ 1\leq i\leq m<j\leq n.$$ 
Putting \eqref{li} into the last expression, one has 
 $$d_{ij}(\alpha_1e_1^2+\alpha_2e_2^2+\ldots+\alpha_me_m^2)+d_{ji}e_i^2=0$$    
 Therefore, we get the following system  
$$\alpha_i d_{ij}+d_{ji}=0,\ \ 
\alpha_l d_{ji}=0,\ 1\leq l\leq m ,\ l\neq i.$$
This implies that $d_{ij}=d_{ji}=0.$
Let us define $\cE_1=\cE\ominus \bigcup_{j=m+1}^{n}\langle e_j\rangle$. It is clear that $\cE_1$ has rank $m$ hence $det(A_{\cE_1})\neq 0$ thus the derivation of $\cE_1$ is trivial.  Using (ii) of Lemma \ref{inv} we have  $D_{\cE_1}=0.$ 
So,  $spec(d)=\{d_{jk}: \ m+1\leq i,k \leq n\}.$
Next, we define  $\cE_2=\cE\ominus \bigcup_{i=1}^{m}\langle e_i\rangle$ then the structural matrix $A_{\cE_2}$ is  an $n-m\times n-m$  skew symmetric matrix and by (ii) of Lemma \ref{inv}  if $n-m=1$ then the $rank(A_{\cE})=n-1$ so, by 
Proposition \ref{odd} we conclude that $d(\cE)$ is trivial. Now, consider $n-m>1$ then repeating the same process for $\cE_1$ many times until we reach to $\cE_k=\cE_2\ominus \cup_{p=1}^{n-2}\left\langle e_p \right\rangle 
$ whose the structural matrix $A_{\cE_k}$ is a $2\times 2$  skew symmetric matrix, and $\det(A_{\cE_k})\neq 0$. Hence $D_{\cE_k}$ is trivial. Consequently, Compiling all information together, we infer that $D$ is trivial, which proves the theorem.   
\end{proof}

We notice that in \cite{EL19} other classes of evolution algebras are discussed whose derivations are trivial.

 \begin{rk} From the last result we conclude that if the graph is completed then the derivation of the corresponding algebra is trivial. It would be interesting to know: if the graph is not completed, does there exist a non-trivial derivation on this algebra. 
 \end{rk}
 
 Now, let us turn to a particular case in order to describe full derivation in terms of graphs. 
 
 The next result describes derivation of three dimensional Volterra algebra in terms of graphs.
 
 \begin {thm}\label{tab}
 Let $\cE$ be a $3$-dimensional Volterra evolution algebra then its derivation aa  in the following table:
\begin{center}
\begin{tabular}{|lX|lX|c|}

\hline    Graph & Derivation  & $\dim$(Annihilator)   \\
\hline $\begin{tikzpicture}%
[->,>=stealth',shorten >=-5pt,auto,thick,every node/.style={circle,fill=blue!20,draw}]
  \node (n1) at (6,8)  {1};
  \node (n2) at (8,8) {2};
  \node (n3) at (7,7)  {3};

   \path[every node/.style={font=\sffamily\small}]
           (n2)  edge [bend left] node[below] {$a_3$} (n3)
        (n1)  edge  [bend left] node[above] {$a_1$} (n2)
       (n3)  edge  [bend left] node[above] {$a_2$} (n1); 
       
\end{tikzpicture}$ &Trivial& 0   \\
 \hline $\begin{tikzpicture}%
[->,>=stealth',shorten >=-5pt,auto,thick,every node/.style={circle,fill=blue!20,draw}]
  \node (n1) at (6,8)  {1};
  \node (n2) at (8,8) {2};
  \node (n3) at (7,7)  {3};

   \path[every node/.style={font=\sffamily\small}]
        (n1)  edge  [bend left] node[above] {$a_1$} (n2)
       (n3)  edge  [bend left] node[above] {$a_2$} (n1); 
       
\end{tikzpicture}$&$\left(
\begin{array}{ccc}
2\alpha & 0 & 0\\
0 & \alpha & 3a\alpha \\
0 & -3a^2\alpha & \alpha   
\end{array}\right)$, or  Trivial  
  & 0 \\
\hline
 $
\begin{tikzpicture}%
[->,>=stealth',shorten >=1pt,auto,thick,every node/.style={circle,fill=blue!20,draw}]

  \node (n1) at (4,4)  {1};
  \node (n2) at (6,4)  {2};
  \node (n3) at (8,4)  {3};

   \path[every node/.style={font=\sffamily\small}]

           (n1) edge node [above] {$1$} (n2) ;

\end{tikzpicture}$& $\left(
\begin{array}{ccc}
0 & 0 & 0\\
0 & 0 & 0 \\
0 & 0 & \alpha   
\end{array}\right)$& 1\\
\hline

\end{tabular}
\end{center}
\end{thm}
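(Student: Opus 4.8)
The plan is to read off all derivations directly from the multiplication table, organising the computation by the shape of the attached graph. For $n=3$ a non-trivial Volterra evolution algebra has an underlying (unweighted) graph with one, two or three edges, and these are exactly the three rows of the table; by Theorem~\ref{nd} it suffices to treat one representative of each type, and by Corollary~\ref{isoth} the weights of a representative may be normalised. Writing a derivation as a matrix $D=(d_{ij})$ with $D(e_i)=\sum_k d_{ik}e_k$ and using $e_ke_j=0$ for $k\neq j$, the Leibniz rule on the vanishing products $e_ie_j$ ($i\neq j$) gives the \emph{off-diagonal} relations
$$d_{ij}\,e_j^2+d_{ji}\,e_i^2=0,\qquad i\neq j,$$
while the Leibniz rule on $e_i^2$ gives $2d_{ii}\,e_i^2=D(e_i^2)$, and expanding the right-hand side through the structure constants $e_i^2=\sum_k c_{ik}e_k$ produces the \emph{diagonal} relations by comparing coefficients. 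Solving this homogeneous linear system in the nine unknowns $d_{ij}$, case by case, is the whole content of the theorem. I would also record once the elementary fact $\ann(\cE)=\espan{e_j:\,e_j^2=0}$, so that the last column is simply the number of isolated vertices of the graph.

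Two of the three rows need little work. For the complete graph $K_3$ every vertex has positive degree, so each $e_i^2\neq0$; hence $\ann(\cE)=0$, and Theorem~\ref{comgraph} already asserts that every derivation is trivial, which disposes of the first row. For the single edge, normalised so that $e_1^2=e_2$, $e_2^2=-e_1$, $e_3^2=0$, the off-diagonal relations force $d_{12}=d_{21}=d_{31}=d_{32}=0$, and the diagonal relations coming from $e_1^2$ and $e_2^2$ give simultaneously $d_{22}=2d_{11}$ and $d_{11}=2d_{22}$ together with $d_{13}=d_{23}=0$; thus $d_{11}=d_{22}=0$. The relation $e_3^2=0$ constrains nothing, leaving only $d_{33}=\alpha$ free, exactly the matrix in the third row, and $\ann(\cE)=\langle e_3\rangle$ has dimension $1$.

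The two-edge row is the crux. Taking the edges to be those at the vertex $1$ and writing $a$ for the ratio of their weights, so that $e_1^2=e_2+a\,e_3$, $e_2^2=-e_1$, $e_3^2=-a\,e_1$, the off-diagonal relations at the pairs $(1,2)$ and $(1,3)$ kill $d_{12},d_{21},d_{13},d_{31}$, while the pair $(2,3)$ yields the single relation $a\,d_{23}+d_{32}=0$. The diagonal relations from $e_2^2$ and $e_3^2$ force $d_{11}=2d_{22}=2d_{33}$, so I set $d_{22}=d_{33}=\alpha$ and $d_{11}=2\alpha$; the two coefficient equations coming from $e_1^2$ then deliver $d_{23}=3a\alpha$ and, via $a\,d_{23}+d_{32}=0$, $d_{32}=-3a^2\alpha$ — precisely the displayed matrix. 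Since both remaining squares are non-zero, $\ann(\cE)=0$ again.

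The point I expect to be delicate, and the reason for the ``or Trivial'' alternative, is that this two-edge system is over-determined: the same $e_1^2$-equations independently force $d_{32}=3\alpha/a$, and consistency with $d_{32}=-3a^2\alpha$ requires $1+a^3=0$. Hence whenever $1+a^3\neq0$ only $\alpha=0$ survives and $D$ is trivial, whereas for $1+a^3=0$ the one-parameter family displayed in the table genuinely occurs. The main obstacle is therefore not any single computation but keeping the over-determined two-edge system straight and correctly isolating the consistency condition $1+a^3=0$ that separates the two possibilities; the remaining rows then follow from the general off-diagonal and diagonal relations set up at the start.
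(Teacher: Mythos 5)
Your proposal is correct and takes essentially the same approach as the paper: after normalising the weights, both impose the Leibniz rule on the products $e_ie_j$ ($i\neq j$) and on the squares $e_i^2$, then solve the resulting homogeneous linear systems case by case, with the consistency condition $a^3+1=0$ producing exactly the ``or Trivial'' alternative in the two-edge row. The only differences are cosmetic: you settle the complete-graph row by invoking Theorem \ref{comgraph} where the paper recomputes it directly (its Case 1, $ab\neq 0$), and your diagonal relations $d_{11}=2d_{22}=2d_{33}$ are the correct form of the last line of the paper's system \eqref{case3} (misprinted there as $2d_{11}=d_{22}=d_{33}$), whose tabulated answer nonetheless agrees with your solution.
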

\begin{proof}
Let $\cE$ be a three dimension Volterra evolution algebra with structural matrix of constant given by
 $$
\textbf{A}_\cE=\left(
\begin{array}{ccc}
0 & a_1 & a_2    \\
-a_1 & 0 &a_3\\
-a_2 & -a_3& 0 
\end{array}\right)
$$ 
as $\textbf{A}_{\cE}$ is a skew symmetric matrix, then its rank is $2.$ 
Then by proposition \ref{D} this algebra is isomorphic to 
 $$
\textbf{A}_\cE'=\left(
\begin{array}{ccc}
0 & 1 & a    \\
-1 & 0 &b\\
-a & -b& 0 
\end{array}\right)
$$ 
From this algebra one can easily find that $e_3^2=be_1^2+ae_2^2.$ 
Consider  $D(e_1e_2)=d_{12}e_2^2+d_{21}e_1^2=0$ due to the linearity independence of $e_1^2, e_2^2$ we have $d_{12}=d_{21}=0.$
Now, compute $D(e_1e_3)=(-bd_{13}+d_{31})e_1^2+ad_{13}=0$, hence we have $-bd_{13}+d_{31}=0,\ ad_{13}=0.$ By the same argument,  one can find from $D(e_2e_3)$ that $ad_{23}+d_{32}=0$, $-bd_{23}=0.$ 
So, we have the following system 
\begin{eqnarray}\label{case2}
\begin{dcases}
bd_{13}+d_{31}=0; \\
ad_{23}+d_{32}=0; \\
ad_{13}=0,\ \-bd_{23}=0.
\end{dcases}
\end{eqnarray}
Let us consider some cases

\textbf{Case} 1: If $ab\neq0,$ the from \eqref{case2} one gets $d_{23}=d_{32}=d_{13}=d_{31}=0.$
One can compute $D(e_1^2)=2d_{11}(e_2+ae_3)$. On the other hand $D(e_1^2)=d(e_2)+ad(e_3)=d_{22}e_2+ad_{33}e_3$, hence one finds $d_{22}=d_{33}=2d_{11}.$
Furthermore, evaluating $D(e_2^2)$ we have $d_{11}=d_{33}=2d_{22}$, and compiling all these information, we obtain $d_{11}=d_{33}=d_{22}=0.$ Thus, in this case the derivation is trivial.

\textbf{Case} 2: If $ab=0,$  and $a=b=0$. Then from \eqref{case2} we have $d_{31}=d_{32}=0.$ 
Considering $D(e_1^2)=d_{11}e_2,$  and from other side $D(e_1^2)=d(e_2)=d_{22}e_2+d_{23}e_3$, we get  $d_{23}=0,\ d_{22}=2d_{11}.$ Similarly evaluating $D(e_2^2)$  one gets
$d(e_2^2)=-d_{22}e_1$, and moreover, one has $D(e_2^2)=-d(e_1)=-d_{11}e_1+d_{13}e_3$, which implies $d_{13}=0,\ d_{11}=2d_{22}.$ So, finally we have $d_{11}=d_{22}=0.$ 
Hence the derivation in this case has the form as in the last row of the above table.

\textbf{Case} 3: If $ab=0,$ and one of  $a$ and $b$  is no zero. Then due to Proposition \ref{D} we may assume that $a\neq 0.$ Then from system \eqref{case2} one gets $d_{13}=d_{31}=0$, $ad_{23}+d_{32}=0$. By 
evaluating $D(e_i^2)$ for all $1\leq i\leq 3$,  we obtain 
\begin{eqnarray}\label{case3}
\begin{dcases}
d_{22}+ad_{32}=2d_{11};\\
d_{23}+ad_{33}=2ad_{11}; \\
ad_{23}+d_{32}=0; \\
2d_{11}=d_{22}=d_{33}.
\end{dcases}
\end{eqnarray}
Now the system $\eqref{case3}$ has non-trivial solution if and only if $a^3+1=0,$ So, if $a^3+1\neq 0$ then the derivation is zero . If $a^3+1=0$ then by solving \eqref{case3} we get the derivation as in row two of the above table.  This completes the proof
\end{proof}

\begin{rk}
The converse of Theorem \ref{comgraph} is not true, for example the table in Theorem \ref{tab} shows the existence trivial derivation, but the graph is not complete.
\end{rk}

\section*{acknowledgement}
The second named author (F.M.) thanks the UAEU grant Start-Up 2016 No. 31S259 for support.

\end{document}